\newfont{\bb}{msbm10}
\newcommand{\tr}{^{\sf T}}
\newcommand{\C}[1]{{\cal {#1}}}
\newcommand{\g}[1]{\mbox{\boldmath $#1$}}
\newcommand{\m}[1]{{\boldsymbol{#1}}}
\newcommand{\LAMBDA}{{\mbox{\boldmath $\lambda$}}}
\newcommand{\DELTA}{{\mbox{\boldmath $\delta$}}}
\renewcommand{\tilde}{\widetilde}
\renewcommand{\hat}{\widehat}
\renewcommand{\bar}{\overline}
\newtheorem{remark}{Remark}[section]
\newtheorem{assumption}{Assumption}[section]
\begin{document}
\title{Convergence on a symmetric accelerated stochastic ADMM with larger stepsizes
\thanks{
This research was   supported by  the National  Natural Science Foundation of China (12001430, 11625105, 72071158),  the Fundamental Research Funds for the Central Universities (G2020KY05203),    the China Postdoctoral Science Foundation (2020M683545), the Foundation of National Key Laboratory of Science and Technology on Aerodynamic Design and Research (614220119040101)    and   the USA National Science Foundation
(1819161, {2110722}).}
}

\author{
    Jianchao Bai\thanks{{\tt jianchaobai@nwpu.edu.cn},
         {School of Mathematics and Statistics},  the MIIT Key Laboratory of Dynamics and Control of Complex Systems,  Northwestern Polytechnical
        University, Xi'an  710129,     China.}
\and
        Deren Han\thanks{{\tt handr@buaa.edu.cn},
        LMIB of the Ministry of Education, School of Mathematical Sciences,
        Beihang University, Beijing  100191,   China}
\and
    Hao Sun\thanks{{\tt hsun@nwpu.edu.cn},
        {School of Mathematics and Statistics,}  Northwestern Polytechnical
        University, Xi'an  710129,    China.}
\and
    Hongchao Zhang\thanks{{\tt hozhang@math.lsu.edu},
        Department of Mathematics,
        Louisiana State University, Baton Rouge, LA 70803-4918.}
}
\maketitle

\begin{abstract}
In this paper,  we develop a symmetric accelerated stochastic
Alternating Direction Method of Multipliers (SAS-ADMM) for solving separable
convex optimization problems with linear constraints.
The objective  function  is the sum of a possibly  nonsmooth convex function and an
average function of many smooth convex functions.
Our proposed algorithm combines both ideas of ADMM and the techniques of
accelerated stochastic gradient methods  possibly with  variance reduction to solve
the smooth subproblem.
One main feature of SAS-ADMM {is} that its dual variable is symmetrically updated
after each update of the separated primal variable,
which would allow a more flexible and larger convergence region
of the dual variable compared with that of standard deterministic or stochastic ADMM.
This new stochastic optimization algorithm is shown to have ergodic converge
in expectation with  $\C{O}(1/T)$ convergence rate, where $T$ is the number of outer iterations.
Our preliminary   experiments indicate the proposed algorithm is very effective
for solving separable optimization problems from big-data applications.
Finally, 3-block extensions of the algorithm and its variant of an accelerated stochastic
augmented Lagrangian method are discussed in the appendix.

\end{abstract}

\begin{keywords}
convex optimization,   stochastic ADMM, symmetric ADMM,
larger stepsize, proximal mapping, complexity
\end{keywords}

\begin{AMS}
65K10, 65Y20,   68W40, 90C25
\end{AMS}

\pagestyle{myheadings}
\thispagestyle{plain}

\section{Introduction}
We consider the following structured composite convex optimization problem with
linear equality constraints:
\begin{equation} \label{P}
\min \{ f(\m{x}) + g(\m{y}) \mid
 \m{x} \in \C{X}, \; \m{y} \in \C{Y}, \; A\m{x} + B\m{y} = \m{b}\},
\end{equation}
where  $\C{X} \subset \mathbb{R}^{n_1}, \C{Y} \subset \mathbb{R}^{n_2} $ are  closed  convex  subsets,
 $A\in \mathbb{R}^{n \times n_1}$, $B\in \mathbb{R}^{n \times n_2}$,   $\m{b}\in \mathbb{R}^{n}$
are  given, $g: \C{Y} \to \mathbb{R} \cup \{+\infty\} $ is a convex
but possibly  nonsmooth function,  and $f$ is an average of $N$ real-valued  convex functions:
\[
f(\m{x})=\frac{1}{N}\sum\limits_{j=1}^{N}f_{j}(\m{x}).
\]
We assume that each $f_j$ defined on an open set
containing $\C{X}$ is Lipschitz continuously differentiable on $\mathcal{X}$.
Problem (\ref{P}) is also referred as regularized empirical risk minimization
in big-data applications \cite{Ouyang13,ZhaoLiZhou15}, including classification and regression models
in machine learning, where $N$ denotes the  sample size and $f_{j} $ corresponds to the empirical loss.
A major difficulty for solving (\ref{P}) is that the sample size $N$ can be very large
such that it is often computationally prohibitive to evaluate either the full function value or
the gradient of $f$ at each iteration of an algorithm.
Hence, it is essential for an effective algorithm, e.g., a stochastic gradient method,
to explore the summation structure of $f$ in the objective function.

The  augmented Lagrangian function of (\ref{P}) is
\begin{equation} \label{aug-Lagr}
\C{L}_\beta\left( \m{x}, \m{y},\LAMBDA\right) =
\C{L}\left( \m{x}, \m{y},\LAMBDA\right)+\frac{\beta}{2}
\left\| A\m{x}+B\m{y}-\m{b}\right\|^2,
\end{equation}
where  $\beta > 0$ is a  penalty parameter, $\LAMBDA$ is the  Lagrange multiplier and
the Lagrangian of (\ref{P}) is defined as
\begin{equation}\label{Lagr}
\C{L}\left(\m{x}, \m{y},\LAMBDA\right)
=f(\m{x})+g(\m{y})-\LAMBDA\tr
\left( A\m{x}+B\m{y}-\m{b} \right).
\end{equation}
Although the Augmented Lagrangian Method (ALM) can be applied to solve (\ref{P}),
it does not take advantage of  the separable structure of (\ref{P}).
As a splitting version of ALM, the standard
Alternating Direction Method of Multipliers (ADMM, \cite{GM75,GM76})
exploits the separable structure of the objective function and performs the following iterations:
\[
\left \{\begin{array}{l}
\m{x}^{k+1}\in\arg\min\limits_{\m{x}\in\C{X}} \;\C{L}_\beta(\m{x},\m{y}^k,\LAMBDA^k),\\
\m{y}^{k+1}\in\arg\min\limits_{\m{y}\in\C{Y}}\; \C{L}_\beta(\m{x}^{k+1},\m{y},\LAMBDA^k),\\
\LAMBDA^{k+1}=
\LAMBDA^k- s\beta\left( A\m{x}^{k+1}+B\m{y}^{k+1}-\m{b} \right),
\end{array}\right.
\]
where $s\in (0, \frac{1+\sqrt{5}}{2})$  {is}   the stepsize for updating the dual variable $\LAMBDA$.

 { If the Peaceman-Rachford Splitting
Method (PRSM, \cite{PR55})  is applied to the dual of (\ref{P}), then we obtain a  variation of ADMM,
whose iteration reads
\[
\left \{\begin{array}{lll}
\m{x}^{k+1}&\in&\arg\min\limits_{\m{x}\in\C{X}} \C{L}_\beta(\m{x},\m{y}^k,\LAMBDA^k),\\
\LAMBDA^{k+\frac{1}{2}}&=&
\LAMBDA^k-  \beta\left( A\m{x}^{k+1}+B\m{y}^k-\m{b} \right){,}\\
\m{y}^{k+1}&\in&\arg\min\limits_{\m{y}\in\C{Y}} \C{L}_\beta(\m{x}^{k+1},\m{y},\LAMBDA^{k+\frac{1}{2}}),\\
\LAMBDA^{k+1}&=&
\LAMBDA^{k+\frac{1}{2}}-  \beta\left( A\m{x}^{k+1}+B\m{y}^{k+1}-\m{b} \right).
\end{array}\right.
\]
PRSM is also called Symmetric ADMM (S-ADMM)  since the Lagrange multipliers are symmetrically updated twice in each loop. Note that both updates of dual variable in PRSM  use  the same constant  stepsize 1.
  Recently, Luo-Yang \cite{LYang20}  proposed a fast S-ADMM for  solving (\ref{P}) with only equality constraints, where the Nesterov's acceleration technique was applied for an additional update of
 $\LAMBDA^{k+1}$ and the $\m{y}$ variable was updated again by solving
\[ \min\limits_{\m{y}} g(\m{y})-(\hat{\LAMBDA}^{k+1})\tr B\m{y}~~ \textrm{ with } ~~\hat{\LAMBDA}^{k+1}= \LAMBDA^{k+1} +\frac{\theta^{k+1}(1-\theta^k)}{\theta^k}(\LAMBDA^{k+1}-\LAMBDA^k)
\]
and  $\theta^{k+1}= {2}/(k+1)$.
  }
Motivated from the ideas of enlarging the dual stepsize in \cite{HLWY14},
Gu, et al. \cite{GuJH15} proposed  a symmetric proximal ADMM whose dual variable is updated twice with different stepsizes.
{Meanwhile,}    {  the following extension of S-ADMM was developed by He, et al. \cite{HeMaYuan2016}:}
\begin{equation} \label{He-sy}
\left \{\begin{array}{lll}
\m{x}^{k+1}&\in&\arg\min\limits_{\m{x}\in\C{X}} \C{L}_\beta(\m{x},\m{y}^k,\LAMBDA^k),\\
\LAMBDA^{k+\frac{1}{2}}&=&
\LAMBDA^k- \tau\beta\left( A\m{x}^{k+1}+B\m{y}^k-\m{b} \right){,}\\
\m{y}^{k+1}&\in&\arg\min\limits_{\m{y}\in\C{Y}} \C{L}_\beta(\m{x}^{k+1},\m{y},\LAMBDA^{k+\frac{1}{2}}),\\
\LAMBDA^{k+1}&=&
\LAMBDA^{k+\frac{1}{2}}- s\beta\left( A\m{x}^{k+1}+B\m{y}^{k+1}-\m{b} \right),
\end{array}\right.
\end{equation}
where, {for convergence, the stepsize pair $(\tau,s)$ is required to belong to the following region}:
\[
\Delta_0=\left\{(\tau,s)\mid ~s\in  (0, (1+\sqrt{5})/2),~ \tau+s>0,~\tau\in(-1,1),~ \mid\tau\mid<1+s-s^2\right\}.
\]
Bai, et al. \cite{BLXZ2018} further designed a Generalized  Symmetric ADMM (GS-ADMM) for solving
a multi-block separable convex optimization  and enlarged the above   region $\Delta_0$  to
$\Delta$ defined in (\ref{domainK}).
{Numerical experiments show that symmetrically updating the dual variable in a more
flexible way often improves the algorithm performance  \cite{BLXZ2018,GuJH15,HeMaYuan2016}.
The  sublinear convergence rate of GS-ADMM in the nonergodic sense and its linear convergence rate
were shown in \cite{BXCL19}.
To our knowledge,} $\Delta$  is currently the largest convergence
region of the dual stepsizes for symmetric ADMM-type algorithms and has been used in the logarithmic-quadratic proximal based  ADMM for solving the 2-block problems \cite{WuL19} and the grouped multi-block problems \cite{BMS20}.

{Another line of developing ADMM is to apply relaxation techniques such as using
\[
\m{\chi}^{k+1}=\alpha A \m{x}^{k+1} + (1-\alpha)(\m{b}-B\m{y}^k)
\]
to replace $A\m{x}^{k+1}$ during the update of $\m{y}^{k+1}$ and $\LAMBDA^{k+1}$, where  $\alpha\in (0,2)$ is a relaxation factor.  This leads to the classical Generalized ADMM (G-ADMM, \cite{EcBe92}):
\[
\left \{\begin{array}{lll}
\m{x}^{k+1}&\in&\arg\min\limits_{\m{x}\in\C{X}} \C{L}_\beta(\m{x},\m{y}^k,\LAMBDA^k),\\
\m{y}^{k+1}&\in&\arg\min\limits_{\m{y}\in\C{Y}} g(\m{y})- (\LAMBDA^k)\tr B\m{y} +\frac{\beta}{2}
\left\| \m{\chi}^{k+1}+B\m{y}-\m{b}\right\|^2,\\
\LAMBDA^{k+1}&=&
\LAMBDA^k-  \beta\left( \m{\chi}^{k+1}+B\m{y}^{k+1}-\m{b} \right).
\end{array}\right.
\]
 Clearly,  G-ADMM with $\alpha=1$  would reduce to the standard ADMM with unit dual stepsize.
ADMM using some additional proximal terms in their subproblems is also called G-ADMM.
For instance, the $\m{x}$-subproblem in \cite{FHLY15} was proposed as
$  \min\limits_{\m{x}\in\C{X}} \C{L}_\beta(\m{x},\m{y}^k,\LAMBDA^k)+\frac{1}{2}\|\m{x}-\m{x}^k\|_G^2$,
 where $G$ is a symmetric positive definite matrix.
For more recent G-ADMMs using possibly indefinite proximal terms,
one may refer to the references \cite{JiWuCai20,XiCLi18}.


}

For convergence rate of ADMM, it is well-known that most of deterministic ADMM algorithms  \cite{BLXZ2018,CBSL20,FHLY15,GuHYa2014,HagerZhang2019,HLWY14,HeMaYuan2016,
JiWuCai20,SSY2017,XWu2011,YYan2015} enjoy a global
$\C{O}(1/T)$ ergodic convergence rate for convex separable optimization,
where $T$ is the iteration number.
Under the assumption that the subdifferential of each component
objective function is piecewise linear, Yang-Han \cite{YangHn16}
established linear convergence rate of ADMM for two-block
separable convex optimization.
Assuming that an error bound condition holds,   the dual stepsize
is sufficiently small and the coefficient matrices in the equality constraint have full column ranks,
Hong-Luo \cite{HongLuo17} showed a linear convergence rate of their multi-block ADMM.
Zhang et al. \cite{NZWZ20} developed a majorized ADMM with indefinite proximal  terms (iPADMM)
for a class of composite convex optimization problems, and  analyzed the convergence of this iPADMM  with
 a linear convergence rate under a local error bound condition.
Moreover, Chang et al. \cite{CBSL20} proposed a linearized symmetric  ADMM with indefinite proximal regularization and optimal proximal parameter for solving the multi-block separable convex optimization.
More recently, Yuan-Zeng-Zhang \cite{YZZ20} show that the local linear convergence of ADMM can
be guaranteed by a partial error bound condition. For more details about linear convergence rate under strongly convexity assumption, we refer the {interested} readers to
\cite{BaiHz2019,CaiHanYuan17,GoldsteinDSB14,LinMaZhang15,LiuShangCheng17} and the references therein.


\section{Preliminaries}
\subsection{Notations and assumptions}\label{Sepre}
Let  $\mathbb{R}$, $\mathbb{R}^n$, and $\mathbb{R}^{n\times l}$
be the sets of  real numbers,  $n$ dimensional real column vectors,
and $n\times l$  dimensional  real matrices, respectively.
The $\m{I}$ and $\m{0}$ denote the identity matrix and the zero matrix/vector, respectively.
For any  symmetric matrices $A$ and $B$ of the same dimension, $A \succ B$
($A \succeq B$) means $A - B$ is a positive definite (semidefinite) matrix.
For any symmetric matrix $G$, define $\|\m{x}\|_G^2 := \m{x} \tr G \m{x}$  and $\|\m{x}\|_G := \sqrt{\m{x} \tr G \m{x}}$ if
$G\succeq \m{0}$.
We use $\|\cdot\|$ to denote the standard Euclidean norm  equipped
with inner product $\langle \cdot,\cdot\rangle$,
$\nabla f(\m{x})$ to represent the gradient of $f$ at $\m{x}$, { and $\mathbb{E}[\cdot]$ to denote mathematical expectation}.
We also define{
\begin{equation}\label{vector-w-v}
\m{w}=\left(\begin{array}{c}
 \m{x}\\\m{y}\\ \LAMBDA
\end{array}\right),~~
\C{J}(\m{w})=\left(\begin{array}{c}
  -A\tr \LAMBDA\\ -B\tr\LAMBDA\\  A\m{x}+B\m{y}-\m{b}
\end{array}\right),
\end{equation}
and
\begin{equation}\label{vector-w-v12}
\m{w}^k=\left(\begin{array}{c}
  \m{x}^k\\\m{y}^k\\ \LAMBDA^k
\end{array}\right),~~
\C{J}(\m{w}^k)=\left(\begin{array}{c}
 -A\tr \LAMBDA^k\\ -B\tr\LAMBDA^k\\ A\m{x}^k+B\m{y}^k-\m{b}
\end{array}\right).
\end{equation}
For convenience of analysis, we {simply denote}   $F(\m{w}) =  f(\m{x}) + g(\m{y}).$}

We make the following two assumptions:
\smallskip
\begin{assumption} \label{asum-1}
The primal-dual solution set   $\Omega^*$ of  problem (\ref{P}) is nonempty, and the problem
$
\min\limits_{\m{y}\in \C{Y}} \left\{g(\m{y})+\frac{1}{2}\m{y}\tr B\tr B \m{y} +\m{z}\tr \m{y}\right\}
$
has a minimizer for any $\m{z}\in \mathbb{R}^{n_2}$.
 \smallskip
\end{assumption}
\begin{assumption} \label{asum-2}
For any $\C{H} \succ \m{0}$, there exists a constant $\nu > 0$ such that the gradients $\nabla f_j$ satisfy the Lipschitz condition
\begin{equation}\label{g-lipschitz}
\| \nabla f_j (\m{x}_1) - \nabla f_j (\m{x}_2) \|_{\C{H}^{-1}} \le \nu \|\m{x}_1 - \m{x}_2\|_{\C{H}}
\end{equation}
for every $\m{x}_1, \m{x}_2 \in \C{X}$ and $j=1,2,\cdots,N$.
\end{assumption}

The first assumption is a basic assumption to ensure the solvability of the problem. Under  Assumption \ref{asum-2}, it holds that for every $\m{x}, \m{y} \in \C{X}$, we have
\[
f(\m{x}_1)\leq f(\m{x}_2)+\langle \nabla f(\m{x}_2), \m{x}_1 - \m{x}_2 \rangle +
\frac{\nu}{2}\| \m{x}_1-\m{x}_2\|_{\C{H}}^2.
\]

\subsection{Variational characterization {of (\ref{P})}}

Denote $\Omega=\C{X}\times\C{Y}\times \mathbb{R}^n$.
{It's well-known in convex optimization that any saddle-point of
the Lagrangian (\ref{Lagr})  corresponds to a primal-dual solution of problem (\ref{P}).}
A point $\m{w}^*:=(\m{x}^*;\m{y}^*;\LAMBDA^*)\in \Omega$
is  called a saddle-point of $\C{L}\left(\m{x}, \m{y},\LAMBDA\right)$ if it satisfies
\begin{equation}\label{sadd-point}
\C{L}\left(\m{x}^*,\m{y}^*,\LAMBDA\right)
\leq\C{L}\left(\m{x}^*,\m{y}^*,\LAMBDA^*\right)
\leq \C{L}\left(\m{x},\m{y},\LAMBDA^*\right),
\quad \forall\m{w}\in \Omega,
\end{equation}
which  is equivalent to
\[
\left \{\begin{array}{lllll}
f(\m{x})- f(\m{x}^*) &+&
(\m{x}-\m{x}^*)\tr \left(-A\tr \LAMBDA^*\right)&\geq&  0, \\
 g(\m{y})- g(\m{y}^*) &+&
(\m{y}-\m{y}^*)\tr \left(- B\tr\LAMBDA^*\right)&\geq& 0, \\
&&A\m{x}^*+B\m{y}^*- \m{b} &=&  \m{0}.
\end{array}\right.
\]
Rewriting these inequalities as a more compact form, it  gives
\begin{equation}\label{Sec3-1}
\quad F(\m{w})- F(\m{w}^*) +(\m{w} -\m{w}^*)\tr \C{J}(\m{w}^*) \geq  0,\quad \forall \m{w}\in \Omega.
\end{equation}
Notice that  the affine  mapping $\C{J}(\cdot)$ is skew-symmetric. So, we have
\begin{eqnarray} \label{Sec1-J}
\left(\m{w}-\bar{\m{w}}\right)\tr
\left[\C{J}(\m{w})-\C{J}(\bar{\m{w}})\right]\equiv0,\quad \forall \m{w},\bar{\m{w}}\in \Omega.
\end{eqnarray}
Hence, (\ref{Sec3-1}) is also equivalent to
\begin{equation}\label{Sec3-1-jb}
\quad F(\m{w})- F(\m{w}^*) +(\m{w} -\m{w}^*)\tr \C{J}(\m{w}) \geq  0,\quad \forall\m{w}\in \Omega.
\end{equation}
{The above discussion shows  that the saddle-point  $\m{w}^*$ can be
also  characterized by the variational inequality (\ref{Sec3-1-jb}).}

\section{The proposed algorithm}\label{algor-main}
Motivated from the stochastic  AS-ADMM developed in \cite{BaiHz2019} and the deterministic GS-ADMM
proposed in \cite{BLXZ2018},  we now propose a Symmetric Accelerated Stochastic ADMM  (SAS-ADMM, {i.e., Algorithm~\ref{algo1}}),
{ which has the similarly dual stepsize   region $\Delta$ to that
of GS-ADMM defined as
\begin{equation}  \label{domainK}
\Delta=\left\{(\tau,s)\mid ~\tau+s>0,~\tau\leq 1,~ -\tau^2 -s^2 -\tau s +\tau +s +1\geq0\right\}.
\end{equation}
}
The main features of SAS-ADMM are summarized as follows:
\begin{itemize}
\item[(i)]
SAS-ADMM has many   {analogous advantages} to AS-ADMM developed in \cite{BaiHz2019}.
Specifically, SAS-ADMM has low memory requirement since there is no need to save
previous stochastic gradients and iterates. The subroutine $\mathbf{xsub}$ is a variant of deterministic accelerated gradient method
where the full gradient is replaced by a stochastic gradient.
{In addition, the users have the flexibility of choosing a zero mean random vector $\m{e}_t$ to reduce the variance of $\hat{\m{g}}_t$. A simple choice is $\m{e}_t=\m{0}$, while faster convergence is
observed in the numerical experiments when a variance reduction technique
is employed (see (\ref{et-set}) in Section~\ref{NumExp}). }
Under our blank assumption that $g$ is a proper convex function, the proximal {$\m{y}$}-subproblem is solvable.
Also, under the assumption that the projection onto the constraint set $\mathcal{X}$ is simple,
the iterations in subroutine $\mathbf{xsub}$
can be performed efficiently when both  $\C{M}_k$ and $\C{H}$ are multiples of identity matrix\footnote{As explained in Remark \ref{41re} and experiments, both $\C{M}_k$ and $\C{H}$
 could be chosen as multiples of identity matrix.
 So the $\breve{\m{x}}_{t+1}$-subproblem is equivalent to a projection onto $\C{X}$.}.
\item[(ii)]
Unlike the classical  ADMM and AS-ADMM \cite{BaiHz2019}, the dual variable of SAS-ADMM is
symmetrically updated twice { and allowed to use the large} stepsize region (\ref{domainK}).
{   SAS-ADMM will reduce to the  aforementioned PRSM  if the $\m{x}$-subproblem is solved
deterministically as in S-ADMM, $L=\m{0}$ and $(\tau,s)=(1,1)\in\Delta$.
When the stepsize $\tau=0$ and $L = \m{0}$, SAS-ADMM reduces to AS-ADMM with stepsize $s\in(0, \frac{1+\sqrt{5}}{2}]$(the half-open interval).}  Compared with the standard stepsize region {$(0, \frac{1+\sqrt{5}}{2})$} for the dual variable of ADMM, this symmetric updates of dual variable is more balanced, flexible
and often lead better numerical performance.

\renewcommand\figurename{Alg.}
\begin{figure}
\begin{center}
{ \tt
\begin{tabular}{l}
\hline
\\
{\bf Parameters:}
$\beta>0,~ \C{H} \succ \m{0},~ L\succeq \m{0}$ and $(\tau,s)\in \Delta$ given by (\ref{domainK}).\\
{\bf Initialization:}
$(\m{x}^0,\m{y}^0,\LAMBDA^0)$
$\in\C{X}\times\C{Y}\times \mathbb{R}^n,~ {\breve{\m{x}}^0=\m{x}^0}$.\\
For  $k=0,1,\ldots$ \\
\hspace*{.3in}Choose $m_k >0,~ \eta_k>0$ and $\C{M}_k$ such that
$\C{M}_k -  \beta A \tr A \succeq\m{0}$.\\
\hspace*{.3in}$\m{h}^k :=$
$-A\tr \left[\LAMBDA^k-\beta(A\m{x}^k+B\m{y}^k-\m{b})\right]$. \\
\hspace*{.3in}$(\m{x}^{k+1}, \breve{\m{x}}^{k+1}) =$
{\bf xsub} ($\m{x}^k, \breve{\m{x}}^k, \m{h}^k)$. \\
\hspace*{.3in}$\LAMBDA^{k+\frac{1}{2}}=\LAMBDA^k-
\tau \beta\left(A\m{x}^{k+1}+B\m{y}^{k}-\m{b}\right)$.\\
\hspace*{.3in}$\m{y}^{k+1}\in\arg\min\limits_{\m{y} \in \C{Y} }    \C{L}_\beta\left( \m{x}^{k+1}, \m{y},\LAMBDA^{k+\frac{1}{2}}\right)
 +\frac{1}{2}\left\|\m{y}-\m{y}^k\right\|_{L}^2  $.\\
\hspace*{.3in}$\LAMBDA^{k+1}=\LAMBDA^{k+\frac{1}{2}}-
s \beta\left(A\m{x}^{k+1}+B\m{y}^{k+1}-\m{b}\right).$\\
end\\
\hline
\\
$(\m{x}^+, \breve{\m{x}}^+) =$
{\bf xsub} ($\m{x}_1$, $\breve{\m{x}}_1$, $\m{h}$).\\
\textbf{For} $t=1$, 2, $\ldots$, $m_k$ \\
\hspace*{.3in}Randomly select $\xi_t \in \{1, \; 2,\;  \ldots,\;  N\}$
with uniform probability.\\
\hspace*{.3in}$\beta_t = 2/(t+1)$, $\gamma_t = 2/(t\eta_k)$,
$\hat{\m{x}}_t = \beta_t \breve{\m{x}}_t + (1-\beta_t)\m{x}_t$. \\
\hspace*{.3in}$\m{d}_t = \hat{\m{g}}_t + \m{e}_t$, where $\hat{\m{g}}_t = \nabla f_{\xi_t} (\hat{\m{x}}_t)$ and $\m{e}_t$ is a random vector\\
 \hspace*{1.2in}satisfying $\mathbb{E}[\m{e}_t]=\m{0}$. \\
\hspace*{.3in}$\breve{\m{x}}_{t+1}=\arg\min
\left\{\langle \m{d}_t + \m{h},\m{x}\rangle+\frac{\gamma_t}{2}
\left\|\m{x}-\breve{\m{x}}_{t}\right\|_{ \C{H} }^2
+\frac{1}{2} \left\|\m{x}-\m{x}^k \right\|_{ \C{M}_k}^2 :
\m{x} \in \C{X} \right\}.$ \\
\hspace*{.3in}$\m{x}_{t+1}=$
$\beta_t \breve{\m{x}}_{t+1}+(1-\beta_t)\m{x}_{t}$.\\
\textbf{end}\\
\textbf{Return} $(\m{x}^+, \breve{\m{x}}^+) =
(\m{x}_{m_k+1}, \breve{\m{x}}_{m_k+1})$.\\
\hline
\end{tabular}
}
\end{center}
\caption{ {Symmetric} accelerated stochastic ADMM (SAS-ADMM) with lager stepsizes}\label{algo1}
\end{figure}
\renewcommand\figurename{Fig.}

\item[(iii)]
If $m_k=1, N=1$, {then} SAS-ADMM degrades to a linearized symmetric ADMM.
When $m_k>1, N=1$, SAS-ADMM is a multi-step deterministic inexact symmetric ADMM.
Hence, the convergence properties developed in this
paper also apply to these deterministic algorithms as special cases.
Moreover, by taking  $L=\gamma \m{I}-\beta B\tr B$ for some $\gamma >0$, the $\m{y}$-subproblem
 would become the following proximal mapping problem:
\begin{equation}\label{prox-y-problem}
\textrm{prox}_\gamma^g(\m{y}_c^k):=\arg\min\limits_{\m{y} \in \C{Y} }    g(\m{y})+ \frac{\gamma}{2}
\left\|   \m{y} -\m{y}_c^k
\right\|^2,
\end{equation}
where $\m{y}_c^k={\m{y}^k -} {\beta B\tr (A\m{x}^{k+1}+B\m{y}^k-\m{b}- {\LAMBDA^{k+\frac{1}{2}}}/{\beta})}/{\gamma}$.
In this case, {the Assumption \ref{asum-1} is not required since strong convexity
of the $\m{y}$-subproblem implies a unique global solution and  a}  closed-form solution
may exist when $g$ has certain structure.
\item[(iv)]
{
With the aid of variational analysis, we show that SAS-ADMM has the worst-case $\C{O}(1/T)$ ergodic
convergence rate in terms of the expectation of both the objective value gap and the constraint violation,
where $T$ is the number of the outer iterations. Preliminary  experiments and results show that SAS-ADMM performs competitively well and often {slightly} better than AS-ADMM \cite{BaiHz2019}
for solving a family of  separable convex optimization problems arising from big-data applications. }

\end{itemize}

\section{Convergence analysis}\label{Theor res}
{To establish   convergence of Algorithm~\ref{algo1}, we first need the following lemma about the iterates generated by the {\bf xsub} routine in
Algorithm~\ref{algo1}. The lemma was given in  \cite{BaiHz2019} and thus  we omit its proof.}
\begin{lemma} \cite[Lemma 3.2]{BaiHz2019} \label{Sec31-3}
Let   $\DELTA_t=\nabla f(\hat{\m{x}}_{t})-\m{d}_t$.
Suppose $\eta_k\in(0,  {1}/{\nu})$ { and Assumption \ref{asum-2} holds}. Then,
the iterates generated by Algorithm~\ref{algo1} satisfy
\begin{equation}\label{Sec3-5}
\qquad f(\m{x})-f(\m{x}^{k+1}) + \left\langle \m{x} -
\m{x}^{k+1},-A\tr\tilde{\LAMBDA}^k\right\rangle \geq
\left\langle \m{x}^{k+1}-\m{x} , \C{D}_k (\m{x}^{k+1}
-\m{x}^k) \right\rangle+\zeta^k
\end{equation}
for all $\m{x} \in \C{X}$, where
\begin{equation}\label{Sec3-5-jb}
\tilde{\LAMBDA}^k=\LAMBDA^k-\beta \left( A \m{x}^{k+1}
+B \m{y}^{k}  - \m{b} \right),~   {\C{D}_k = \C{M}_k - \beta A \tr A}  \quad \textrm{and}~
\end{equation}
\begin{eqnarray}
&&\zeta^k = \frac{2}{m_k(m_k+1)}
\bigg[\frac{1}{\eta_k}\left(\left\|\m{x}
-\breve{\m{x}}^{k+1}\right\|_{\C{H}}^2-
\left\|\m{x}-\breve{\m{x}}^{k}\right\|_{\C{H}}^2\right)
\label{zeta_k} \\
&& ~~~  ~-\sum\limits_{t=1}^{m_k}t\langle \DELTA_t, \breve{\m{x}}_{t}
-\m{x}\rangle -\frac{\eta_k}{4(1-\eta_k\nu)}
\sum\limits_{t=1}^{m_k}t^2\left\|\DELTA_t\right\|_{\C{H}^{-1}}^2\bigg].
\nonumber
\end{eqnarray}
\end{lemma}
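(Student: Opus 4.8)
The plan is to view the subroutine $\mathbf{xsub}$ as a stochastic, proximally regularized instance of Tseng's accelerated gradient method applied to $\min_{\m{x}\in\C{X}}\{f(\m{x})+\langle\m{h}^k,\m{x}\rangle+\frac12\|\m{x}-\m{x}^k\|_{\C{M}_k}^2\}$, to run the usual accelerated-gradient potential argument across the $m_k$ inner steps, and then to translate the resulting estimate back into the ADMM-type inequality~(\ref{Sec3-5}) via an algebraic identity relating $\m{h}^k$, $\tilde\LAMBDA^k$ and $\C{D}_k$. I emphasize that~(\ref{Sec3-5}) is a pathwise statement: no expectation is taken, and the randomness enters only through $\DELTA_t=\nabla f(\hat{\m{x}}_t)-\m{d}_t$, so the whole argument is deterministic once the iterates are fixed.

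First I would analyze one inner iteration. From the first-order optimality condition of the $\breve{\m{x}}_{t+1}$-subproblem, the three-point identity $\langle\C{H}(a-b),a-c\rangle=\frac12(\|a-b\|_{\C{H}}^2+\|a-c\|_{\C{H}}^2-\|b-c\|_{\C{H}}^2)$, the descent lemma stated after Assumption~\ref{asum-2} (applied in the $\C{H}$-norm), convexity of $f$, the relations $\hat{\m{x}}_t=\beta_t\breve{\m{x}}_t+(1-\beta_t)\m{x}_t$ and $\m{x}_{t+1}=\beta_t\breve{\m{x}}_{t+1}+(1-\beta_t)\m{x}_t$ (so that $\m{x}_{t+1}-\hat{\m{x}}_t=\beta_t(\breve{\m{x}}_{t+1}-\breve{\m{x}}_t)$), and the splitting $\nabla f(\hat{\m{x}}_t)=\m{d}_t+\DELTA_t$, I would reach a Nesterov-type recursion valid for every $\m{x}\in\C{X}$,
\[
f(\m{x}_{t+1})-f(\m{x})\le(1-\beta_t)\big(f(\m{x}_t)-f(\m{x})\big)-\beta_t\langle\m{h}^k,\breve{\m{x}}_{t+1}-\m{x}\rangle-\frac{\beta_t\gamma_t}{2}\big(\|\m{x}-\breve{\m{x}}_{t+1}\|_{\C{H}}^2-\|\m{x}-\breve{\m{x}}_t\|_{\C{H}}^2\big)+R_t,
\]
where $R_t=-\beta_t\langle\C{M}_k(\breve{\m{x}}_{t+1}-\m{x}^k),\breve{\m{x}}_{t+1}-\m{x}\rangle+\beta_t\langle\DELTA_t,\breve{\m{x}}_{t+1}-\m{x}\rangle$ plus a nonpositive multiple of $\|\breve{\m{x}}_{t+1}-\breve{\m{x}}_t\|_{\C{H}}^2$ whose post-scaling coefficient $-c_t$ satisfies $c_t\ge(1-\eta_k\nu)/\eta_k>0$ exactly because $\eta_k<1/\nu$ (this is where the Lipschitz term of the descent lemma gets absorbed).

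Next I would multiply the $t$-th inequality by $w_t:=t(t+1)/2$. With $\beta_t=2/(t+1)$ one has $w_t(1-\beta_t)=w_{t-1}$, so the function-value terms telescope; with $\gamma_t=2/(t\eta_k)$ one has $w_t\beta_t\gamma_t=2/\eta_k$, so the $\C{H}$-distances telescope with a constant coefficient $1/\eta_k$. Writing $\langle\DELTA_t,\breve{\m{x}}_{t+1}-\m{x}\rangle=\langle\DELTA_t,\breve{\m{x}}_t-\m{x}\rangle+\langle\DELTA_t,\breve{\m{x}}_{t+1}-\breve{\m{x}}_t\rangle$ and applying Young's inequality $\langle t\DELTA_t,\m{v}\rangle\le\frac{t^2}{4c_t}\|\DELTA_t\|_{\C{H}^{-1}}^2+c_t\|\m{v}\|_{\C{H}}^2$ cancels the residual $\|\breve{\m{x}}_{t+1}-\breve{\m{x}}_t\|_{\C{H}}^2$ terms and, since $\frac{t^2}{4c_t}\le\frac{\eta_k t^2}{4(1-\eta_k\nu)}$, reproduces the last term in~(\ref{zeta_k}). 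Summing $t=1,\dots,m_k$, using $w_0=0$, $\breve{\m{x}}_1=\breve{\m{x}}^k$, $\breve{\m{x}}_{m_k+1}=\breve{\m{x}}^{k+1}$, $\m{x}_{m_k+1}=\m{x}^{k+1}$, and dividing by $w_{m_k}=m_k(m_k+1)/2$ gives
\[
f(\m{x}^{k+1})-f(\m{x})\le\langle\m{h}^k,\m{x}-\m{x}^{k+1}\rangle+\langle\C{M}_k(\m{x}^{k+1}-\m{x}^k),\m{x}-\m{x}^{k+1}\rangle-\zeta^k ,
\]
where for the linear $\m{h}^k$-term I use that, since $\beta_1=1$, unrolling $\m{x}_{t+1}=\beta_t\breve{\m{x}}_{t+1}+(1-\beta_t)\m{x}_t$ yields $\m{x}^{k+1}=\sum_{t=1}^{m_k}\frac{2t}{m_k(m_k+1)}\breve{\m{x}}_{t+1}$, a convex combination, and for the $\C{M}_k$-term I use in addition that $\m{z}\mapsto\langle\C{M}_k(\m{z}-\m{x}^k),\m{z}-\m{x}\rangle$ is convex (because $\C{M}_k\succeq\m{0}$, which follows from $\C{M}_k-\beta A\tr A\succeq\m{0}$), so that Jensen's inequality lets the averaging pass outside.

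Finally I would convert this to~(\ref{Sec3-5}). From $\tilde\LAMBDA^k=\LAMBDA^k-\beta(A\m{x}^{k+1}+B\m{y}^k-\m{b})$ and $\m{h}^k=-A\tr[\LAMBDA^k-\beta(A\m{x}^k+B\m{y}^k-\m{b})]$ a one-line computation gives $\m{h}^k=-A\tr\tilde\LAMBDA^k-\beta A\tr A(\m{x}^{k+1}-\m{x}^k)$, hence $\langle\m{h}^k,\m{x}-\m{x}^{k+1}\rangle+\langle\C{M}_k(\m{x}^{k+1}-\m{x}^k),\m{x}-\m{x}^{k+1}\rangle=\langle-A\tr\tilde\LAMBDA^k,\m{x}-\m{x}^{k+1}\rangle-\langle\C{D}_k(\m{x}^{k+1}-\m{x}^k),\m{x}^{k+1}-\m{x}\rangle$ with $\C{D}_k=\C{M}_k-\beta A\tr A$; substituting and moving terms across produces exactly~(\ref{Sec3-5}). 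I expect the main obstacle to be careful bookkeeping in the telescoping step rather than anything conceptual --- in particular making the accumulated $\C{H}$-distances come out with the orientation $\|\m{x}-\breve{\m{x}}^{k+1}\|_{\C{H}}^2-\|\m{x}-\breve{\m{x}}^{k}\|_{\C{H}}^2$ of~(\ref{zeta_k}), and choosing the Young constant uniformly enough in $t$ to recover the factor $\eta_k/(4(1-\eta_k\nu))$. The one genuinely delicate point is the Jensen step for the quadratic $\C{M}_k$-term, which relies on the explicit convex-combination weights $2t/(m_k(m_k+1))$ and therefore on the specific choice $\beta_t=2/(t+1)$.
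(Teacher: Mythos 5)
Your proposal is correct: the per-iteration recursion, the weights $w_t=t(t+1)/2$ chosen so that $w_t(1-\beta_t)=w_{t-1}$ and $w_t\beta_t\gamma_t=2/\eta_k$, the Young-inequality absorption of $\langle\DELTA_t,\breve{\m{x}}_{t+1}-\breve{\m{x}}_t\rangle$ into the leftover $\|\breve{\m{x}}_{t+1}-\breve{\m{x}}_t\|_{\C{H}}^2$ term, the Jensen step for the $\C{M}_k$-quadratic using $\m{x}^{k+1}=\sum_t\frac{2t}{m_k(m_k+1)}\breve{\m{x}}_{t+1}$, and the identity $\m{h}^k=-A\tr\tilde{\LAMBDA}^k-\beta A\tr A(\m{x}^{k+1}-\m{x}^k)$ all check out and deliver (\ref{Sec3-5}) with exactly the $\zeta^k$ of (\ref{zeta_k}). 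The paper itself gives no proof, deferring to Lemma 3.2 of \cite{BaiHz2019}; your argument is essentially the same accelerated-gradient potential analysis used there, so there is nothing further to reconcile.
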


Based on the above lemma, we {can immediately establish the following result}.
\begin{lemma} \label{Sec31-bz6}
Suppose $\eta_k\in(0,  {1}/{\nu})$. {Then, the} iterates generated by Algorithm \ref{algo1} satisfy
\begin{equation} \label{Sec3-bz7}
\quad F(\m{w})-F(\tilde{\m{w}}^k) +
\left\langle \m{w} - \tilde{\m{w}}^k,
\C{J}(\m{w})\right\rangle
\geq  ( \m{w}-\tilde{\m{w}}^k)\tr
Q_k (\m{w}^k-\tilde{\m{w}}^k) + \zeta^k
\end{equation}
for all $\m{w} \in \Omega$,
where   $\zeta^k$  and {$\tilde{\g{\lambda}}^k$ are defined in  (\ref{zeta_k}) and (\ref{Sec3-5-jb})},
\begin{equation}\label{Sec31-vwuJ}
\qquad\tilde{\m{w}}^k =
\left(\begin{array}{c}
  \m{\tilde{x}}^k\\ \m{\tilde{y}}^k\\  \tilde{\LAMBDA}^k
\end{array}\right) =
\left(\begin{array}{c}
  \m{x}^{k+1}\\ \m{y}^{k+1}\\    \tilde{\LAMBDA}^k
\end{array}\right)~ \mbox{ and }~
Q_k=\left[\begin{array}{ccc}
           \C{D}_k &&   \\
          &L+\beta B\tr B&  -\tau B\tr  \\
      & -B &     \frac{1}{\beta} \m{I}
\end{array}\right].
\end{equation}
\end{lemma}
\begin{proof}
By the first-order optimality condition of the $\m{y}$-subproblem, we have
\begin{equation}\label{Chapt5-a-Sec31-xN}
g(\m{y}) - g(\m{y}^{k+1})
+ \left\langle\m{y}-\m{y}^{k+1}, \m{p}_k \right\rangle\geq  0,
\quad \forall \m{y}\in \C{Y},
\end{equation}
where $\m{p}_k$ is the gradient of the smooth terms in the objective function of the $\m{y}$-subproblem:
\begin{eqnarray*}
\m{p}_k&=&
-B\tr  \LAMBDA^{k+\frac{1}{2}} +\beta B\tr\left(A\m{x}^{k+1}+B\m{y}^{k+1}-\m{b}\right)  +L(\m{y}^{k+1}-\m{y}^k)
\nonumber\\
&=& -B\tr  \LAMBDA^{k+\frac{1}{2}} +\beta B\tr \left(A\m{x}^{k+1}+B\m{y}^k-\m{b}\right)+  \left[
L
+ \beta B\tr B \right] \left(\m{y}^{k+1} - \m{y}^k  \right)  \nonumber \\
&=&  -B\tr  \LAMBDA^{k+\frac{1}{2}} +  B\tr \left( \LAMBDA^k- \tilde{\LAMBDA}^k\right)+  \left[
L
+ \beta B\tr B \right] \left(\m{y}^{k+1} - \m{y}^k  \right)\\
&=& -B\tr \tilde{\LAMBDA}^k + \tau B\tr( \LAMBDA^{k}-\tilde{\LAMBDA}^{k}) +
 \left[ L+ \beta B\tr B \right] \left(\m{y}^{k+1} - \m{y}^k  \right).
\end{eqnarray*}
The above last equality uses the following relation
\begin{equation}\label{LA12}
\LAMBDA^{k+\frac{1}{2}}= \LAMBDA^{k}-\tau( \LAMBDA^{k}-\tilde{\LAMBDA}^{k}).
\end{equation}
By the definition of $\tilde{\LAMBDA}^k$, we have
\begin{equation} \label{bjc-100}
\left( A \tilde{\m{x}}^k +B \tilde{\m{y}}^k
- \m{b} \right) - B \left(  \tilde{\m{y}}^k -  \m{y}^k \right) +\frac{1}{\beta}\left(\tilde{\LAMBDA}^k - \LAMBDA^k\right)={\m{0}}.
\end{equation}
Taking inner product of the above equality with ${\LAMBDA} -\tilde{\LAMBDA}^k$, we get
\begin{equation} \label{bjc-100zj}
\qquad~~ \left\langle  {\LAMBDA} -\tilde{\LAMBDA}^k, A \tilde{\m{x}}^k +B \tilde{\m{y}}^k
- \m{b} \right\rangle= \left\langle {\LAMBDA} -\tilde{\LAMBDA}^k,
 - B \left(    \m{y}^k-\tilde{\m{y}}^k  \right) +\frac{1}{\beta}\left( \LAMBDA^k- \tilde{\LAMBDA}^k \right)\right\rangle.
\end{equation}
Then, the inequality (\ref{Sec3-bz7}) is achieved by combining
 (\ref{Sec3-5}), (\ref{Chapt5-a-Sec31-xN}), (\ref{bjc-100zj}) together with the property (\ref{Sec1-J}).
\end{proof}
\smallskip

\subsection{More technical results} \label{sec322-key}
We show the following corollaries for establishing the main convergence theorem
of Algorithm~\ref{algo1}.

\smallskip
\begin{corollary}\label{coll37}
Suppose $\eta_k\in(0,  1/\nu)$. {Then,
the} iterates generated by Algorithm \ref{algo1} satisfy
\begin{eqnarray} \label{bjc-39}
&&F(\m{w})-F(\tilde{\m{w}}^k) +
( \m{w} - \tilde{\m{w}}^k)\tr \C{J}(\m{w})\\
& &\ge
\frac{1}{2}\left\{\left\|\m{w}-\m{w}^{k+1}\right\|^2_{\tilde{Q}_k}
-\left\|\m{w}-\m{w}^{k}\right\|^2_{\tilde{Q}_k}
+  \left\|\m{w}^k-\tilde{\m{w}}^k\right\|_{\tilde{G}_k}^2 \right\} + \zeta^k \nonumber
\end{eqnarray}
for all $\m{w} \in \Omega$, where {$\zeta^k$  is defined in {(\ref{zeta_k})} and}
\begin{equation}\label{tilde-Q}
 {\scriptsize\tilde{Q}_k=\left[\begin{array}{ccc}
           \C{D}_k &&   \\
          &L+\left(1-\frac{\tau s}{\tau+s}\right)\beta B\tr B&  -\frac{\tau}{\tau+ s} B \tr  \\
      & -\frac{\tau}{\tau+ s} B &    \frac{1}{\beta(\tau +s)} \m{I}
\end{array}\right], ~
\tilde{G}_k=\left[\begin{array}{ccc}
           \C{D}_k &&   \\
          &L+ (1-s)\beta B\tr B&  (s-1) B \tr  \\
      & (s-1) B &    \frac{2-  \tau-s}{\beta} \m{I}
\end{array}\right]}.
\end{equation}

\end{corollary}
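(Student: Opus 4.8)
The plan is to transform the bilinear term $(\m{w}-\tilde{\m{w}}^k)\tr Q_k(\m{w}^k-\tilde{\m{w}}^k)$ on the right‑hand side of (\ref{Sec3-bz7}) in Theorem~\ref{Sec31-bz6} into a telescoping difference of squared $\tilde{Q}_k$‑distances plus a residual quadratic form, by (i) eliminating the ``predicted'' multiplier $\tilde{\LAMBDA}^k$ in favour of the true iterate $\LAMBDA^{k+1}$ through the two half‑step dual updates, and (ii) a polarization identity for a symmetric matrix; the term $\zeta^k$ is simply carried over to both sides.

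First I would record the linear relations among $\LAMBDA^k$, $\LAMBDA^{k+\frac12}$, $\LAMBDA^{k+1}$ and $\tilde{\LAMBDA}^k$. Combining the two dual updates of Algorithm~\ref{algo1}, the definition (\ref{Sec3-5-jb}) of $\tilde{\LAMBDA}^k$, the relation (\ref{LA12}), and (\ref{bjc-100}) rewritten as $\beta(A\m{x}^{k+1}+B\m{y}^{k+1}-\m{b})=\beta B(\m{y}^{k+1}-\m{y}^k)+(\LAMBDA^k-\tilde{\LAMBDA}^k)$, one obtains
\[
\LAMBDA^k-\LAMBDA^{k+1}=(\tau+s)(\LAMBDA^k-\tilde{\LAMBDA}^k)+s\beta B(\m{y}^{k+1}-\m{y}^k),
\]
hence $\LAMBDA^k-\tilde{\LAMBDA}^k=\frac{1}{\tau+s}\left[(\LAMBDA^k-\LAMBDA^{k+1})-s\beta B(\m{y}^{k+1}-\m{y}^k)\right]$ and $\LAMBDA^{k+1}-\tilde{\LAMBDA}^k=(1-\tau-s)(\LAMBDA^k-\tilde{\LAMBDA}^k)+s\beta B(\m{y}^{k+1}-\m{y}^k)$.

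Next, using $\tilde{\m{x}}^k=\m{x}^{k+1}$ and $\tilde{\m{y}}^k=\m{y}^{k+1}$ from (\ref{Sec31-vwuJ}), I would substitute these relations into $Q_k(\m{w}^k-\tilde{\m{w}}^k)$ block‑row by block‑row. The $\m{x}$‑row is unchanged, while in the $\m{y}$‑ and $\LAMBDA$‑rows the coefficient $1-\frac{s}{\tau+s}=\frac{\tau}{\tau+s}$ appears, yielding the key identity $Q_k(\m{w}^k-\tilde{\m{w}}^k)=\tilde{Q}_k(\m{w}^k-\m{w}^{k+1})$ with $\tilde{Q}_k$ exactly as in (\ref{tilde-Q}). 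The point of this rewriting is that $\tilde{Q}_k$ is \emph{symmetric} (its off‑diagonal blocks $-\frac{\tau}{\tau+s}B\tr$ and $-\frac{\tau}{\tau+s}B$ are transposes of each other), whereas $Q_k$ is not when $\tau\neq1$. Writing $\m{w}-\tilde{\m{w}}^k=(\m{w}-\m{w}^{k+1})+(\m{w}^{k+1}-\tilde{\m{w}}^k)$ and applying $2\m{a}\tr\tilde{Q}_k\m{b}=\|\m{a}\|_{\tilde{Q}_k}^2+\|\m{b}\|_{\tilde{Q}_k}^2-\|\m{a}-\m{b}\|_{\tilde{Q}_k}^2$ with $\m{a}=\m{w}-\m{w}^{k+1}$ and $\m{b}=\m{w}^k-\m{w}^{k+1}$ (so $\m{a}-\m{b}=\m{w}-\m{w}^k$), one gets
\[
(\m{w}-\tilde{\m{w}}^k)\tr Q_k(\m{w}^k-\tilde{\m{w}}^k)=\frac12\left(\|\m{w}-\m{w}^{k+1}\|_{\tilde{Q}_k}^2-\|\m{w}-\m{w}^k\|_{\tilde{Q}_k}^2\right)+\frac12\|\m{w}^k-\m{w}^{k+1}\|_{\tilde{Q}_k}^2+(\m{w}^{k+1}-\tilde{\m{w}}^k)\tr\tilde{Q}_k(\m{w}^k-\m{w}^{k+1}).
\]

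It then remains to check that the last two ($\m{w}$‑independent) terms equal $\frac12\|\m{w}^k-\tilde{\m{w}}^k\|_{\tilde{G}_k}^2$. Since $\m{w}^{k+1}-\tilde{\m{w}}^k$ has vanishing $\m{x}$‑ and $\m{y}$‑components and $\LAMBDA$‑component $\LAMBDA^{k+1}-\tilde{\LAMBDA}^k$, and since the $\LAMBDA$‑part of $\tilde{Q}_k(\m{w}^k-\m{w}^{k+1})$ collapses via $-\frac{\tau}{\tau+s}B(\m{y}^k-\m{y}^{k+1})+\frac{1}{\beta(\tau+s)}(\LAMBDA^k-\LAMBDA^{k+1})=\frac1\beta(\LAMBDA^k-\tilde{\LAMBDA}^k)-B(\m{y}^k-\m{y}^{k+1})$, substituting the first‑paragraph relations and collecting the coefficients of $\|\m{y}^k-\m{y}^{k+1}\|^2$, $\|B(\m{y}^k-\m{y}^{k+1})\|^2$, $\langle B(\m{y}^k-\m{y}^{k+1}),\LAMBDA^k-\tilde{\LAMBDA}^k\rangle$ and $\|\LAMBDA^k-\tilde{\LAMBDA}^k\|^2$ reproduces exactly the $(2,2)$‑, $(2,3)$‑, $(3,2)$‑ and $(3,3)$‑blocks $L+(1-s)\beta B\tr B$, $(s-1)B\tr$, $(s-1)B$, $\frac{2-\tau-s}{\beta}\m{I}$ of $\tilde{G}_k$, the $(1,1)$‑block $\C{D}_k$ being untouched. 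Plugging this back into (\ref{Sec3-bz7}) and restoring $\zeta^k$ gives (\ref{bjc-39}). I expect this last verification to be the main obstacle: it is purely algebraic but requires careful bookkeeping of the $B$, $B\tr$ and $B\tr B$ terms and of the scalars $\tau$, $s$ and $\tau+s$ — the preceding passage from the non‑symmetric $Q_k$ to the symmetric $\tilde{Q}_k$ (equivalently, from $\tilde{\LAMBDA}^k$ to $\LAMBDA^{k+1}$) being precisely what neutralizes the asymmetry and legitimizes the polarization step.
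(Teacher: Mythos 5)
Your proposal is correct and follows essentially the same route as the paper: the paper encodes your substitution of $\tilde{\LAMBDA}^k$ by $\LAMBDA^{k+1}$ as $\m{w}^k-\m{w}^{k+1}=P(\m{w}^k-\tilde{\m{w}}^k)$ with an explicit lower-triangular matrix $P$, verifies $Q_kP^{-1}=\tilde{Q}_k$, applies the same polarization identity, and then disposes of your final bookkeeping step in one line via the matrix identity $\tilde{G}_k=P\tr\tilde{Q}_k+\tilde{Q}_kP-P\tr\tilde{Q}_kP$. The only slip is the sign of the last term in your auxiliary formula for $\LAMBDA^{k+1}-\tilde{\LAMBDA}^k$, which should read $(1-\tau-s)(\LAMBDA^k-\tilde{\LAMBDA}^k)-s\beta B(\m{y}^{k+1}-\m{y}^k)$; this would need correcting before the final coefficient check but does not affect the validity of the approach.
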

\begin{proof}
By (\ref{LA12}) and the way of generating   $\LAMBDA^{k+1}$, we have
\begin{equation}\label{121-zlg}
- s \beta B \left( \m{y}^k-\tilde{\m{y}}^k \right) + (\tau+s) \left(\LAMBDA^k-\tilde{\LAMBDA}^k \right) =
\LAMBDA^k- \LAMBDA^{k+1},
\end{equation}
which, by the definition of $\tilde{\m{w}}^k$ in (\ref{Sec31-vwuJ}), further shows
\begin{equation}\label{def-P}
\quad\m{w}^k - \m{w}^{k+1} = P \left(  \m{w}^k-\tilde{\m{w}}^k  \right)~~ \textrm{with}~~
P=\left[\begin{array}{ccccc}
           \m{I} && &&  \\
          && \m{I} &&    \\
      && -s \beta B  & &   (\tau+s) \m{I}
\end{array}\right].
\end{equation}
Hence,   the   relation
$
Q_k ( \m{w}^k-\tilde{\m{w}}^k ) =Q_kP^{-1}(\m{w}^k - \m{w}^{k+1})
$
holds and
\[
Q_kP^{-1}=\left[\begin{array}{ccc}
           \C{D}_k &&   \\
          &L+\left(1-\frac{\tau s}{\tau+s}\right)\beta B\tr B&  -\frac{\tau}{\tau+ s} B \tr  \\
      & -\frac{\tau}{\tau+ s} B &    \frac{1}{\beta(\tau +s)} \m{I}
\end{array}\right]=\tilde{Q}_k.
\]
For any $\m{w}\in \Omega$, it follows from   (\ref{Sec3-bz7}) and the above relation  that
\begin{eqnarray}\label{Sec3-bjc-1}
 &&F(\m{w})-F(\tilde{\m{w}}^k) +
( \m{w} - \tilde{\m{w}}^k)\tr \C{J}(\m{w})  \\
&& \ge    \zeta^k+ ( \m{w}- \tilde{\m{w}}^k)\tr \tilde{Q}_k (\m{w}^k - \m{w}^{k+1}) =   \zeta^k+\frac{1}{2}\left\{\left\|\m{w}-\m{w}^{k+1}\right\|^2_{\tilde{Q}_k}\right.\nonumber\\
&&~\left.-
\left\|\m{w}-\m{w}^{k}\right\|^2_{\tilde{Q}_k}+ \left\|\m{w}^k-\tilde{\m{w}}^k\right\|_{\tilde{Q}_k}^2 -
\left\|\m{w}^{k+1}-\tilde{\m{w}}^k\right\|_{\tilde{Q}_k}^2\right\}, \nonumber
 \end{eqnarray}
where   the   equality uses the   identity
\[
 2(a-b)\tr  \tilde{Q}_k(c-d)=   \|a-d\|_{\tilde{Q}_k}^2-\|a-c\|_{\tilde{Q}_k}^2 + \|c-b\|_{\tilde{Q}_k}^2-\|b-d\|_{\tilde{Q}_k}^2
\]
with specifications $ a:=\m{w}, b:= \tilde{\m{w}}^k,  c:=\m{w}^k,   d:=\m{w}^{k+1}.$

Now, by (\ref{def-P}) again, we deduce
\begin{eqnarray*}
&& \left\|\m{w}^k-\tilde{\m{w}}^k\right\|_{\tilde{Q}_k}^2 -  \left\|\m{w}^{k+1}-\tilde{\m{w}}^k\right\|_{\tilde{Q}_k}^2 \\
& =&  \left\|\m{w}^k-\tilde{\m{w}}^k\right\|_{\tilde{Q}_k}^2 -  \left\|\m{w}^{k+1}- \m{w}^k + \m{w}^k - \tilde{\m{w}}^k\right\|_{\tilde{Q}_k}^2 \\
& =& \left\|\m{w}^k-\tilde{\m{w}}^k\right\|_{\tilde{Q}_k}^2 - \left\| \m{w}^k - \tilde{\m{w}}^k - P ( \m{w}^k-\tilde{\m{w}}^k )\right\|_{\tilde{Q}_k}^2 \\
& = & \left\|\m{w}^k-\tilde{\m{w}}^k\right\|_{\tilde{G}_k}^2,
\end{eqnarray*}
where we uses the relation
$ \tilde{G}_k  =  P \tr \tilde{Q}_k + \tilde{Q}_k P - P \tr \tilde{Q}_k P
$ to obtain the last equality. Then, (\ref{bjc-39}) follows from (\ref{Sec3-bjc-1}).
\end{proof}
\smallskip

{
In the above Corollary \ref{coll37} and its proof, since  $\tilde{Q}_k$ is not necessarily positive semidefinite for any parameter $\tau$,
we abuses  the notation $\|\m{w}^k\|_{\tilde{Q}_k}^2 :=  (\m{w}^k) \tr \tilde{Q}_k \m{w}^k$.
Next, we provide a sufficient condition {to ensure}  the positive semidefiniteness of $\tilde{Q}_k$.
}

\begin{lemma} \label{Posity-1}
Let $L \succeq (\tau - 1) \beta B \tr B$. Then, the matrix $\tilde{Q}_k$ given by   (\ref{tilde-Q})  is symmetric positive semidefinite for any $(\tau,s)\in \Delta$.
\end{lemma}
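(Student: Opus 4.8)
The plan is to exploit the block structure of $\tilde{Q}_k$ and reduce the positive semidefiniteness to a single Schur complement computation. First I would note that $\tilde{Q}_k$ is block diagonal with respect to the splitting of $\m{w}$ into the $\m{x}$-part and the $(\m{y},\LAMBDA)$-part, and that its $\m{x}$-block is $\C{D}_k=\C{M}_k-\beta A\tr A$, which is positive semidefinite by the choice of $\C{M}_k$ in Algorithm~\ref{algo1} (the requirement $\C{M}_k-\beta A\tr A\succeq\m{0}$). Hence it suffices to prove that the $(\m{y},\LAMBDA)$-block
\[
S:=\left[\begin{array}{cc}
L+\left(1-\frac{\tau s}{\tau+s}\right)\beta B\tr B & -\frac{\tau}{\tau+s}B\tr \\[2mm]
-\frac{\tau}{\tau+s}B & \frac{1}{\beta(\tau+s)}\m{I}
\end{array}\right]
\]
is positive semidefinite.

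Second, since $(\tau,s)\in\Delta$ implies $\tau+s>0$, the lower–right block $\frac{1}{\beta(\tau+s)}\m{I}$ of $S$ is positive definite, so by the Schur complement criterion $S\succeq\m{0}$ is equivalent to the positive semidefiniteness of
\begin{eqnarray*}
&& L+\left(1-\frac{\tau s}{\tau+s}\right)\beta B\tr B-\left(\frac{\tau}{\tau+s}B\tr\right)\big(\beta(\tau+s)\,\m{I}\big)\left(\frac{\tau}{\tau+s}B\right) \\
&& \qquad = L+\beta\left(1-\frac{\tau s+\tau^{2}}{\tau+s}\right)B\tr B .
\end{eqnarray*}

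Third, I would use the elementary cancellation $\frac{\tau s+\tau^{2}}{\tau+s}=\frac{\tau(\tau+s)}{\tau+s}=\tau$ to rewrite this Schur complement as $L+(1-\tau)\beta B\tr B=L-(\tau-1)\beta B\tr B$, which is positive semidefinite exactly under the hypothesis $L\succeq(\tau-1)\beta B\tr B$. Combining with $\C{D}_k\succeq\m{0}$ then yields $\tilde{Q}_k\succeq\m{0}$.

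There is no genuine obstacle here — the computation is elementary, the only delicate point being that the coupling term cancels cleanly against part of the $(\m{y},\m{y})$-block. It is worth remarking that this lemma uses only the inequality $\tau+s>0$ from the definition of $\Delta$ (to make the $\LAMBDA$-block of $S$ positive definite); the remaining constraints of $\Delta$ and $\tau\le1$ are not needed for $\tilde{Q}_k\succeq\m{0}$, but they will be used later when establishing properties such as $\tilde{G}_k\succeq\m{0}$ and the convergence estimate.
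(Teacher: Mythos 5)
Your proof is correct and follows essentially the same route as the paper's: both reduce the claim to the lower-right $2\times 2$ block, use $\C{D}_k=\C{M}_k-\beta A\tr A\succeq\m{0}$ for the $\m{x}$-block, and exploit $\tau+s>0$ together with the cancellation $\frac{\tau s+\tau^{2}}{\tau+s}=\tau$; your Schur-complement step is exactly the block-diagonalizing congruence that the paper writes out explicitly with the transformation matrix $\left[\begin{smallmatrix}\m{I}&\\ \tau\m{I}&\m{I}\end{smallmatrix}\right]$. The only minor difference is that by keeping $L$ inside the Schur complement, rather than first replacing $L$ by its lower bound $(\tau-1)\beta B\tr B$ and then factoring as the paper does, you additionally observe that the hypothesis on $L$ is necessary as well as sufficient for the $(\m{y},\LAMBDA)$-block to be positive semidefinite.
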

\begin{proof}
Clearly, we just need to check the lower-upper 2-by-2 block of $\tilde{Q}_k$, i.e.,
\begin{eqnarray*}
&&\tilde{Q}_k^{L}=\left[\begin{array}{cc}
          L+\left(1-\frac{\tau s}{\tau+s}\right)\beta B\tr B&  -\frac{\tau}{\tau+ s} B \tr  \\
       -\frac{\tau}{\tau+ s} B &    \frac{1}{\beta(\tau +s)} \m{I}
\end{array}\right]\\
&&\succeq\left[\begin{array}{cc}
          \left(\tau-\frac{\tau s}{\tau+s}\right)\beta B\tr B&  -\frac{\tau}{\tau+ s} B \tr  \\
       -\frac{\tau}{\tau+ s} B &    \frac{1}{\beta(\tau +s)} \m{I}
\end{array}\right]\\
&&=\left[\begin{array}{cc}
          \beta^{\frac{1}{2}} B&    \\
        &    \beta^{-\frac{1}{2}}  \m{I}
\end{array}\right]\tr \left[\begin{array}{cc}
         \frac{\tau^2}{\tau+s} \m{I}&  -\frac{\tau}{\tau+ s} \m{I}  \\
       -\frac{\tau}{\tau+ s} \m{I} &    \frac{1}{\tau +s} \m{I}
\end{array}\right] \left[\begin{array}{cc}
          \beta^{\frac{1}{2}} B&    \\
        &    \beta^{-\frac{1}{2}}  \m{I}
\end{array}\right]
\end{eqnarray*}
is positive semidefinite. Notice that
\begin{eqnarray*}
\left[\begin{array}{cc}
          \m{I}&    \\
       \tau \m{I} &    \m{I}
\end{array}\right]\tr \left[\begin{array}{cc}
          \frac{\tau^2}{\tau+s} \m{I}&  -\frac{\tau}{\tau+ s} \m{I}  \\
       -\frac{\tau}{\tau+ s} \m{I} &    \frac{1}{\tau +s} \m{I}
\end{array}\right] \left[\begin{array}{cc}
          \m{I}&    \\
       \tau \m{I} &    \m{I}
\end{array}\right]
=\left[\begin{array}{cc}
            \m{0}&     \\
        &    \frac{1}{\tau +s} \m{I}
\end{array}\right].
\end{eqnarray*}
So, $\tilde{Q}_k^{L}$ is positive semidefinite since
$\tau+s>0$ for any $(\tau,s)\in \Delta$.
\end{proof}

\medskip

  {To show} the global convergence of Algorithm \ref{algo1},
we need to further establish a useful lower bound on the term
$ \left\|\m{w}^k-\tilde{\m{w}}^k\right\|_{\tilde{G}_k}^2$, since $\tilde{G}_k$ is not necessarily
positive definite for any $(\tau,s)\in\Delta$.
In the {following lemma,}   we assume  $L \succeq \m{0}$, which
implies  $L \succeq (\tau - 1) \beta B \tr B$ since $\tau \le 1$, and as a consequence,
Lemma~\ref{Posity-1} holds.

\smallskip

\begin{lemma} \label{key-lemma}
Let $L \succeq \m{0}$. Then, for any $(\tau,s)\in \Delta$ defined in (\ref{domainK}),
we have
\begin{eqnarray}\label{key-lemma-j12b}
&&  \left\|\m{w}^k-\tilde{\m{w}}^k\right\|_{\tilde{G}_k}^2  \geq
\left\| \m{x}^k - \m{x}^{k+1}\right\|_{\C{D}_k}^2
 +\omega_0 \left\| A\m{x}^{k+1}+B\m{y}^{k+1}-\m{b} \right\|^2   \\
& &
+ \omega_1 \left(\left\| A\m{x}^{k+1}+B\m{y}^{k+1}-\m{b} \right\|^2 -\left\| A\m{x}^k+B\m{y}^k-\m{b} \right\|^2 \right)\nonumber\\
&&+ \omega_2\left( \left\|\m{y}^k-\m{y}^{k+1}\right\|_L^2 -
 \left\|\m{y}^{k-1}-\m{y}^k\right\|_L^2 \right),\nonumber
\end{eqnarray}
where $\omega_i \ge 0$, $i=0, 1, 2$, are given as
\begin{equation}\label{def_omega}
\omega_0=\left(2-\tau-s -\frac{(1-s)^2}{1+\tau}\right)\beta, ~~
\omega_1=\frac{(1-s)^2}{1+\tau}\beta \; \mbox{ and } \;  \omega_2= \frac{1-\tau}{ 1+\tau  }.
\end{equation}
\end{lemma}
\begin{proof}
For any $(\tau,s) \in \Omega$, we have $1+\tau >0$.
By the structure of $\tilde{G}_k$ in (\ref{tilde-Q}), $L \succeq \m{0}$ and (\ref{bjc-100}), we have
\begin{eqnarray}\label{key-lemma-jb1}
&& \left\|\m{w}^k-\tilde{\m{w}}^k\right\|_{\tilde{G}_k}^2
  =  \left\|\m{x}^k - \m{x}^{k+1}\right\|_{\C{D}_k}^2
+ \left\| \m{y}^k- \m{y}^{k+1}  \right\|^2_{L+(1-s)\beta B\tr B}   \\
&&~~ + 2(s-1) \left( \LAMBDA^k-\tilde{\LAMBDA}^k \right) \tr B \left( \m{y}^k- \m{y}^{k+1} \right)
+ \frac{2-\tau-s}{\beta}  \left\| \LAMBDA^k-\tilde{\LAMBDA}^k \right\|^2 \nonumber\\
&& \geq  \left\| \m{x}^k - \m{x}^{k+1}\right\|_{\C{D}_k}^2
 + (2-\tau-s)\beta \left\| A\m{x}^{k+1}+B\m{y}^{k+1}-\m{b} \right\|^2
\nonumber \\
&&~~  + (1-\tau)\beta \left\| B \left( \m{y}^k- \m{y}^{k+1} \right) \right\|^2 \nonumber \\
&&~~  + 2(1-\tau)\beta \left( A\m{x}^{k+1}+B\m{y}^{k+1}- b \right) \tr B \left( \m{y}^k- \m{y}^{k+1} \right).\nonumber
\end{eqnarray}

We now estimate the last crossing term in (\ref{key-lemma-jb1}).
{Taking}  $\m{y} = \m{y}^k$ in the first-order optimality condition (\ref{Chapt5-a-Sec31-xN})   {yields}
\[
{\small g(\m{y}^k)- g(\m{y}^{k+1})
 + \left\langle  \m{y}^k-\m{y}^{k+1},-B\tr  \LAMBDA^{k+\frac{1}{2}} +\beta B\tr (A\m{x}^{k+1}+B\m{y}^{k+1}-\m{b} )  +L(\m{y}^{k+1}-\m{y}^k) \right\rangle\geq  0}.
\]
Similarly,   letting $\m{y} = \m{y}^{k+1}$ in the first-order optimality condition of the $\m{y}$-subproblem at the $(k-1)$-th iteration gives
\[
{\small g(\m{y}^{k+1}) - g(\m{y}^k)
 + \left\langle  \m{y}^{k+1}-\m{y}^k,-B\tr  \LAMBDA^{k-\frac{1}{2}} +\beta B\tr\left(A\m{x}^{k}+B\m{y}^{k}-\m{b}\right)  +L(\m{y}^{k}-\m{y}^{k-1}) \right\rangle\geq  0}.
\]
Summing up the above two inequalities together with the relation
 \[
 \LAMBDA^{k-\frac{1}{2}}-\LAMBDA^{k+\frac{1}{2}} =  \tau \beta \left(A\m{x}^{k+1}+B\m{y}^{k+1}-\m{b}\right)
 + s\beta\left(A\m{x}^k+B\m{y}^k-\m{b}\right)+\tau\beta B(\m{y}^k-\m{y}^{k+1}),
 \]
and noticing that $1+\tau >0$,  we obtain
\begin{eqnarray}\label{key-lemma-jb2}
& &   \left( A\m{x}^{k+1}+B\m{y}^{k+1}- \m{b} \right) \tr B \left( \m{y}^k- \m{y}^{k+1} \right)   \\
& \ge &  \frac{1-s}{1+\tau} \left( A\m{x}^k+B\m{y}^k- \m{b} \right) \tr B \left( \m{y}^k- \m{y}^{k+1} \right) -\frac{\tau}{1+\tau} \left\|B(\m{y}^k-\m{y}^{k+1}) \right\|^2\nonumber\\
&&+\frac{1}{\beta(1+\tau)}\left( \m{y}^k- \m{y}^{k+1} \right)\tr L\left[ \left(\m{y}^k-\m{y}^{k+1} \right)
- \left(\m{y}^{k-1}-\m{y}^k\right)\right]\nonumber \\
& \ge &  \frac{1-s}{1+\tau} \left( A\m{x}^k+B\m{y}^k- \m{b} \right) \tr B \left( \m{y}^k- \m{y}^{k+1} \right) -\frac{\tau}{1+\tau} \left\|B(\m{y}^k-\m{y}^{k+1}) \right\|^2\nonumber\\
&&+\frac{1}{2 \beta(1+\tau)}\left( \left\| \m{y}^k- \m{y}^{k+1}\right\|_L^2 -
\left\| \m{y}^{k-1}- \m{y}^k\right\|_L^2\right).\nonumber
\end{eqnarray}
Then, combining  {(\ref{key-lemma-jb1}) and (\ref{key-lemma-jb2})}  we have
\begin{eqnarray*}
 &&\left\|\m{w}^k-\tilde{\m{w}}^k\right\|_{\tilde{G}_k}^2\\
  &\geq& \left\| \m{x}^k - \m{x}^{k+1}\right\|_{\C{D}_k}^2 +
   (2-\tau-s)\beta \left\| A\m{x}^{k+1}+B\m{y}^{k+1}-\m{b} \right\|^2\\
& &
+ \frac{2\beta}{1+\tau} (1-\tau)(1-s)\left( A\m{x}^k+B\m{y}^k- \m{b} \right) \tr B \left( \m{y}^k- \m{y}^{k+1} \right)\\
&& + (1-\tau)\beta   \left\| B \left( \m{y}^k- \m{y}^{k+1} \right) \right\|^2 -\frac{2\tau(1-\tau)}{1+\tau}\beta  \left\| B \left( \m{y}^k- \m{y}^{k+1} \right) \right\|^2\\
&&+  \frac{1-\tau}{1+\tau} \left( \left\|\m{y}^k-\m{y}^{k+1}\right\|_L^2 - \left\|\m{y}^{k-1}-\m{y}^k\right\|_L^2 \right) \\
&\geq&\left\| \m{x}^k - \m{x}^{k+1}\right\|_{\C{D}_k}^2  +
   \left(2-\tau-s -\frac{(1-s)^2}{1+\tau}\right)\beta \left\| A\m{x}^{k+1}+B\m{y}^{k+1}-\m{b} \right\|^2\\
& &
+ \frac{(1-s)^2}{1+\tau}\beta \left(\left\| A\m{x}^{k+1}+B\m{y}^{k+1}-\m{b} \right\|^2 -\left\| A\m{x}^k+B\m{y}^k-\m{b} \right\|^2 \right)\\
&&+  \frac{1-\tau}{ 1+\tau } \left( \left\|\m{y}^k-\m{y}^{k+1}\right\|_L^2 -   \left\|\m{y}^{k-1}-\m{y}^k\right\|_L^2 \right) \\
&&+ \left(1 - \tau - \frac{(1-\tau)^2}{1+\tau} - \frac{2 \tau (1-\tau)}{1+\tau} \right)\beta\left\|B(\m{y}^k-\m{y}^{k+1})\right\|^2 \\
&=&\left\| \m{x}^k - \m{x}^{k+1}\right\|_{\C{D}_k}^2  +
   \left(2-\tau-s -\frac{(1-s)^2}{1+\tau}\right)\beta \left\| A\m{x}^{k+1}+B\m{y}^{k+1}-\m{b} \right\|^2\\
& &
+ \frac{(1-s)^2}{1+\tau}\beta \left(\left\| A\m{x}^{k+1}+B\m{y}^{k+1}-\m{b} \right\|^2 -\left\| A\m{x}^k+B\m{y}^k-\m{b} \right\|^2 \right)\\
&&+  \frac{1-\tau}{ 1+\tau } \left( \left\|\m{y}^k-\m{y}^{k+1}\right\|_L^2 -   \left\|\m{y}^{k-1}-\m{y}^k\right\|_L^2 \right),
\end{eqnarray*}
where the second inequality follows from the Cauchy-Schwartz inequality
\begin{eqnarray*}
&&2(1-s)(1-\tau) \left( A\m{x}^k+B\m{y}^k- \m{b} \right) \tr B \left( \m{y}^k- \m{y}^{k+1} \right)\\
&\geq&-(1-s)^2\left\|A\m{x}^k+B\m{y}^k- \m{b}\right\|^2 -(1-\tau)^2\left\| B \left( \m{y}^k- \m{y}^{k+1} \right)\right\|^2.
\end{eqnarray*}
So,   {(\ref{key-lemma-j12b}) holds} with  $\omega_i$, $i=0, 1, 2$, defined as in (\ref{def_omega}).
Moreover, for any {$(\tau,s) \in \Delta$,} we can derive $\omega_i \ge 0$ for $i= 0, 1, 2$.
This completes the whole proof.
\end{proof}

\subsection{Iteration complexity in expectation} \label{Sec-var1}
We now analyze the global ergodic convergence and the iteration complexity of Algorithm~\ref{algo1}.
\smallskip

\begin{theorem} \label{Sec3-theore1}
Suppose $L \succeq \m{0}$ and $(\tau,s)\in \Delta$ defined in (\ref{domainK}).
If for some integers $\kappa, T>0$,
the following conditions hold for all  $k\in[\kappa, \kappa+T]$: (I)
$\eta_k\in (0, 1/(2\nu)]$ and
the sequence $\left\{ \eta_km_k(m_k+1)\right\}$ is nondecreasing;
(II) $ \C{D}_k \succeq \C{D}_{k+1} \succeq \m{0}$ and $\mathbb{E}(\|\DELTA_t\|_{\C{H}^{-1}}^2) \le \sigma^2$
for some $\sigma > 0$, where $\DELTA_t$ and $\C{D}_k$ are defined in Lemma~\ref{Sec31-3}. \\
Then, for  any $ \m{w} \in \Omega$, we have
  \begin{eqnarray}\label{Ex-F}
&& \mathbb{E}\left[ F(\m{w}_T)
-F(\m{w})+(\m{w}_T-\m{w})\tr \mathcal{J}(\m{w})\right]\\
&&\leq
\frac{1}{2T}\bigg\{ \sigma^2\sum\limits_{k=\kappa}^{\kappa+T} \eta_k m_k
+ \frac{4}{m_\kappa(m_\kappa+1)\eta_\kappa} \left\|\m{x}- \breve{\m{x}}^\kappa\right\|_{\C{H}}^2 + \left\|\m{w}-\m{w}^\kappa\right\|^2_{\tilde{Q}_\kappa} \nonumber\\
&& ~   + \omega_1 \left\|A\m{x}^\kappa+B\m{y}^\kappa-\m{b}\right\|^2+\omega_2\left\|\m{y}^{\kappa-1}-\m{y}^\kappa\right\|_L^2   \bigg\}, \nonumber
\end{eqnarray}
where $\m{w}_T =\frac{1}{T}\sum_{k=\kappa}^{\kappa+T}\tilde{\m{w}}^{k} $,
$ \omega_1\geq 0$ and $\omega_2\geq 0$ are defined in (\ref{def_omega}).
\end{theorem}
\begin{proof}
By the assumption, $\C{D}_k \succeq \C{D}_{k+1} \succeq \m{0}$
  implies {${\tilde{Q}_k} \succeq {\tilde{Q}_{k+1}} \succeq \m{0}$ for   the matrix $\tilde{Q}_k$ given  in   (\ref{tilde-Q}).}
Substituting (\ref{key-lemma-j12b}) into (\ref{bjc-39}) and utilizing the relation
${\tilde{Q}_k} \succeq {\tilde{Q}_{k+1}}$, it follows from {{Lemma}}~\ref{key-lemma} that
\begin{eqnarray*}
&& F(\tilde{\m{w}}^k) - F(\m{w}) +
( \tilde{\m{w}}^k - \m{w})\tr \mathcal{J}(\m{w}) \le
\label{bjc-39-new} \\
&-&\zeta^k + \frac{1}{2}\left\{\left\|\m{w}-\m{w}^{k}\right\|^2_{\tilde{Q}_k}
-\left\|\m{w}-\m{w}^{k+1}\right\|^2_{\tilde{Q}_{k+1}} \right\}   \\
&+& \frac{\omega_1}{2} \left(\left\| A\m{x}^k+B\m{y}^k-\m{b} \right\|^2 -\left\| A\m{x}^{k+1}+B\m{y}^{k+1}-\m{b} \right\|^2 \right) \\
&+&  \frac{\omega_2}{2}\left(    \left\|\m{y}^{k-1}-\m{y}^k\right\|_L^2 -\left\|\m{y}^k-\m{y}^{k+1}\right\|_L^2 \right),
\end{eqnarray*}
where {$ \omega_1, \omega_2$} are defined in (\ref{def_omega}).
Summing the above inequality  over $k$ between $\kappa$ and $\kappa+T$,
we   deduce by  Lemma \ref{Posity-1} that
\begin{eqnarray}\label{sumover}
&&  \sum\limits_{k=\kappa}^{\kappa+T}F(\tilde{\m{w}}^k)-T\left\{ F(\m{w})
+ \left(  \m{w}_T - \m{w} \right) \tr \mathcal{J}(\m{w})\right\} \leq -\sum\limits_{k=\kappa}^{\kappa+T}\zeta^k\\
&& + \frac{1}{2}
\left\{ \|\m{w}-\m{w}^\kappa\|^2_{\tilde{Q}_\kappa} +
\omega_1 \left\|A\m{x}^\kappa+B\m{y}^\kappa-\m{b}\right\|^2  +\omega_2\left\|\m{y}^{\kappa-1}-\m{y}^\kappa\right\|_L^2  \right\}.\nonumber
\end{eqnarray}
Then, it follows from convexity of $F$ and the definition of $\textbf{w}_T$ that
\begin{equation} \label{Sec3-bjc-6}
F(\m{w}_T)\leq \frac{1}{T}\sum_{k=\kappa}^{\kappa+T}F(\tilde{\m{w}}^{k}).
\end{equation}
Dividing (\ref{sumover}) by $T$ and using (\ref{Sec3-bjc-6}), we obtain
\begin{eqnarray} \label{Sec3-bjc-5}
&&  F(\m{w}_T) -F(\m{w})+ (\m{w}_T-\m{w})\tr \mathcal{J}(\m{w}) \le \frac{1}{T}\left[ -\sum\limits_{k=\kappa}^{\kappa+T}\zeta^k\right.\\
& & \left.+ \frac{1}{2}
\left\{ \|\m{w}-\m{w}^\kappa\|^2_{\tilde{Q}_\kappa}
+ \omega_1 \left\|A\m{x}^\kappa+B\m{y}^\kappa-\m{b}\right\|^2+\omega_2\left\|\m{y}^{\kappa-1}-\m{y}^\kappa\right\|_L^2 \right\} \right]. \nonumber
\end{eqnarray}

Let us now focus on the terms involving $\zeta^k$.
By assumption, the sequence $\left\{m_k(m_k+1)\eta_k\right\}$ is nondecreasing for   $k\in[\kappa, \kappa+T]$
and $\C{H} \succ \m{0}$, thus  we have
\begin{equation}\label{Sec3-bcz-6}
\qquad~~  \sum\limits_{k=\kappa}^{\kappa+T}\frac{2}{m_k(m_k+1)\eta_k}
\left( \left\|\m{x}-\breve{\m{x}}^{k}\right\|_{\C{H}}^2-\left\|\m{x}-
\breve{\m{x}}^{k+1}\right\|_{\C{H}}^2\right)
\leq \frac{2\left\|\m{x}- \breve{\m{x}}^\kappa\right\|_{\C{H}}^2}{m_\kappa(m_\kappa+1)\eta_\kappa}.
\end{equation}
Note that \[
\DELTA_t=\nabla f(\hat{\m{x}}_{t})-\m{d}_t=
\nabla f(\hat{\m{x}}_{t}) -\nabla f_{\xi_t}(\hat{\m{x}}_t)-\m{e}_t
\] only depends on the index $\xi_t$. So we have
  $\mathbb{E}[ \DELTA_t]=\m{0}$ since the random variable $\xi_t \in \{1,2,\ldots, N\}$
is chosen with uniform probability and $\mathbb{E}[\m{e}_t] = \m{0}$.
Also, since
$\breve{\m{x}}_t$ depends on $\xi_{t-1}$, $\xi_{t-2}$, $\ldots$, we have
$
\mathbb{E} \left[ \langle \DELTA_t, \breve{\m{x}}_{t}-\m{x}\rangle \right] =
\m{0}.
$
By the assumption  that $\mathbb{E}(\|\DELTA_t\|_{\C{H}^{-1}}^2) \le \sigma^2$, we have
\[
\mathbb{E} \left[ \sum_{t=1}^{m_k} t^2 \left\|\DELTA_t \right\|_{\C{H}^{-1}}^2 \right] \le
\frac{\sigma^2 m_k (m_k+1)(2m_k + 1)}{6} \le
 \frac{\sigma^2 }{2}  m_k^2 (m_k+1)
\]
since $m_k \ge 1$.
 Combining  these bounds for the terms in $\zeta^k$ with the condition   $\eta_k \le 1/(2\nu)$ is to get
\[
- \mathbb{E} \left[ \sum_{k=\kappa}^{\kappa+T} \zeta^k \right] \le
\frac{2}{m_\kappa(m_\kappa+1)\eta_\kappa} \left\|\m{x}- \breve{\m{x}}^\kappa\right\|_{\C{H}}^2
+ \frac{\sigma^2}{2} \sum_{k=\kappa}^{\kappa+T} \eta_k m_k .
\]
  Finally, applying  the expectation operator to
(\ref{Sec3-bjc-5}) and substituting this bound into the $\zeta^k$ term   complete the proof.
\end{proof}

 \smallskip
By properly setting the algorithm parameters, the following theorem shows the convergence
rate of  Algorithm~\ref{algo1} in the expectation of both the objective function value gap and the constraint violation.
\begin{theorem} \label{2-sec-bj1}
Suppose the conditions in  {Theorem}   \ref{Sec3-theore1}  hold. Let
\begin{equation}\label{Sec33-12ba2}
~~\eta_k= \min \left\{ \frac{c_1}{m_k(m_k+1)},  c_2  \right\} \quad \textrm{and} \quad m_k=\max\left\{\lceil c_3 k^{\varrho}\rceil, m\right\},
\end{equation}
where $c_1, c_2, c_3>0$, $\varrho \ge 1$ are constants and $m>0$ is a given integer.
Then, for every $\m{w}^*\in\Omega^*$, we have
\begin{equation}\label{Ex-Fbj}
\left| \mathbb{E} \big[ F(\m{w}_T)-F(\m{w}^*)\big]\right| = E_\varrho(T) =
\mathbb{E} \big[\left\|A\m{x}_{T}+B\m{y}_{T}-\m{b} \right\|\big],
\end{equation}
where $E_\varrho(T) = \C{O} (1/T)$ for $\varrho > 1$ and
$E_\varrho(T) = \C{O} (T^{-1}\log T)$ for $\varrho = 1$.
\end{theorem}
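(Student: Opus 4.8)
The plan is to bootstrap from Lemma~\ref{Sec3-theore1}, which already provides the essential "summed variational inequality" bound
\[
\mathbb{E}\!\left[ F(\m{w}_T) - F(\m{w}) + (\m{w}_T - \m{w})\tr \mathcal{J}(\m{w}) \right] \le \frac{1}{2T}\,\mathcal{B}_\kappa(\m{w}),
\]
where $\mathcal{B}_\kappa(\m{w})$ collects the five terms on the right-hand side of (\ref{Ex-F}). The first step is to verify that the specific choices (\ref{Sec33-12ba2}) of $\eta_k$ and $m_k$ satisfy all the hypotheses of Lemma~\ref{Sec3-theore1}: that $\eta_k \in (0,1/(2\nu)]$ (choose $c_2 \le 1/(2\nu)$), that $\{\eta_k m_k(m_k+1)\}$ is nondecreasing (it equals $\min\{c_1, c_2 m_k(m_k+1)\}$, which is nondecreasing since $m_k$ is nondecreasing), and that $\C{D}_k \succeq \C{D}_{k+1} \succeq \m{0}$ (this is an assumption on $\C{M}_k$ that we carry over, e.g. take $\C{M}_k$ constant). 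The role of $\varrho \ge 1$ is to control the only genuinely growing term in $\mathcal{B}_\kappa(\m{w})$, namely $\sigma^2 \sum_{k=\kappa}^{\kappa+T} \eta_k m_k$. Since $\eta_k m_k \le c_1/(m_k+1) \le c_1/(c_3 k^\varrho)$, we get $\sum_{k=\kappa}^{\kappa+T} \eta_k m_k = \C{O}(1)$ when $\varrho > 1$ and $\C{O}(\log T)$ when $\varrho = 1$; all other terms in $\mathcal{B}_\kappa(\m{w})$ are constants (independent of $T$) for fixed $\m{w}$. Dividing by $2T$ yields $\mathbb{E}[F(\m{w}_T) - F(\m{w}) + (\m{w}_T - \m{w})\tr \mathcal{J}(\m{w})] \le E_\varrho(T)$ with the claimed rates, where here one should take $\kappa$ fixed (say $\kappa = 1$, or whatever is needed so $\m{y}^{\kappa-1}$ is defined).

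The second step converts this one-sided variational bound into the two-sided bound (\ref{Ex-Fbj}) on the objective gap and the constraint residual. The standard device is to plug two cleverly chosen test points $\m{w}$ into the inequality. Taking $\m{w} = \m{w}^* \in \Omega^*$ and using $(\m{w}_T - \m{w}^*)\tr\mathcal{J}(\m{w}^*) = (\m{w}_T - \m{w}^*)\tr\mathcal{J}(\m{w}_T)$ (skew-symmetry, (\ref{Sec1-J})) together with the saddle-point inequality (\ref{Sec3-1}) gives a lower bound $F(\m{w}_T) - F(\m{w}^*) + (\m{w}_T-\m{w}^*)\tr\mathcal{J}(\m{w}_T) \ge 0$, hence by (\ref{Sec1-J}) again $\mathbb{E}[F(\m{w}_T) - F(\m{w}^*) - \LAMBDA^{*\tr}(A\m{x}_T + B\m{y}_T - \m{b})] \le E_\varrho(T)$. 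For the reverse direction, one takes a perturbed multiplier $\m{w} = (\m{x}^*; \m{y}^*; \LAMBDA^* + \rho\, r_T/\|r_T\|)$ for a suitable constant $\rho$, where $r_T = A\m{x}_T + B\m{y}_T - \m{b}$; since the $\m{x}$- and $\m{y}$-components agree with $\m{w}^*$, the $F$-difference telescopes and one extracts $\mathbb{E}\|r_T\| \le E_\varrho(T)/\rho$ plus a bound on the negative part of the objective gap — this is the classical argument (e.g. in He--Yuan-type analyses) for passing from a VI bound to separate rates on objective error and feasibility violation. Combining the two directions gives $|\mathbb{E}[F(\m{w}_T) - F(\m{w}^*)]| = \C{O}(E_\varrho(T))$ and $\mathbb{E}\|r_T\| = \C{O}(E_\varrho(T))$, as stated.

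**The main obstacle** I anticipate is the second step rather than the first: making the perturbed-multiplier argument fully rigorous requires a uniform (in $T$) bound on $\mathbb{E}\|\LAMBDA^{k+\frac12}\|$ or on the dual iterates, so that the constant $\rho$ can be chosen independently of $T$ and the bound $\mathcal{B}_\kappa$ evaluated at the perturbed point stays $\C{O}(1)$. Typically this boundedness is either assumed or derived from a descent-type inequality on $\|\m{w}^k - \m{w}^*\|^2_{\tilde Q_k}$; since $\tilde Q_k \succeq \tilde Q_{k+1} \succeq \m{0}$ by Lemma~\ref{Posity-1} and the monotonicity of $\C{D}_k$, one should be able to show the sequence $\{\mathbb{E}\|\m{w}^k - \m{w}^*\|^2_{\tilde Q_\kappa}\}$ is bounded, which suffices. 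The accounting of the $\varrho = 1$ logarithmic term through both test-point substitutions is routine once the boundedness is in hand. A minor technical point is that the bound in Lemma~\ref{Sec3-theore1} depends on $\breve{\m{x}}^\kappa$ and $\m{y}^{\kappa-1}$, so one fixes a starting index $\kappa$ once and for all and absorbs those into the $\C{O}(1)$ constants.
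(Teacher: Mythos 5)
Your proposal is correct and follows essentially the same route as the paper, which in fact omits the proof by citing Theorem 4.2 of \cite{BaiHz2019}: that argument is precisely your two-step scheme of (i) checking the hypotheses of Lemma~\ref{Sec3-theore1} and bounding $\sum_k \eta_k m_k$ via $\eta_k m_k \le c_1/(m_k+1)\le c_1/(c_3 k^{\varrho})$, and (ii) converting the variational-inequality bound into objective and feasibility rates by testing at $\m{w}^*$ and at a perturbed multiplier. The one refinement worth making is that, since $r_T$ is random, the perturbed test point should be handled by taking a supremum over the deterministic ball $\{\LAMBDA:\ \|\LAMBDA-\LAMBDA^*\|\le\rho\}$ inside the expectation (which keeps all constants finite because the right-hand side of (\ref{Ex-F}) depends on $\LAMBDA$ only through $\|\m{w}-\m{w}^\kappa\|^2_{\tilde{Q}_\kappa}$), rather than via boundedness of the dual iterates as you suggest.
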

\begin{proof}
The proof is same as that of  \cite[Theorem 4.2]{BaiHz2019} and thus is omitted here.
\end{proof}

\begin{remark}\label{41re}
(I) In practice,  the matrix $\C{M}_k$ in Algorithm \ref{algo1}  could be adaptively adjusted as
$\C{M}_k=\rho_k\m{I}$, where $\rho_k = \max \{ \rho_{\min},   \beta \delta_2^k/\delta_1^k\}$
with $\rho_{\min}>0$,
\[
\delta_1^k = \left\| \m{x}^k - \m{x}^{k-1}  \right\|^2 \quad \mbox{and} \quad
\delta_2^k = \left\| A (\m{x}^k - \m{x}^{k-1})  \right\|^2.
\]
Since $\delta_2^k/\delta_1^k$ is an underestimate of
the largest eigenvalue of $ A \tr A$, to ensure convergence, the safeguard lower bound $\rho_{\min}$
should be increased during the optimization if necessary.
One may see \cite[Remark 4.2]{BaiHz2019} for more details.

{(II) When the set $\C{Y}$ is  {bounded}, we may even use a positive-{indefinite} proximal matrix
 \[
 L= \tau \chi \m{I}-  \beta B \tr B,\quad \textrm{where }~ \beta \|B \tr B \| \leq\chi<+\infty \mbox{ and } \tau \in [-1,1],
 \]
 in the update of $\m{y}$-subproblem. In this case, denoting
$
\C{N}_{\C{Y}}=\sup\{ \|\m{y}_1- \m{y}_2\|:   \m{y}_1, \m{y}_2\in \C{Y}\} < \infty,
$
analogous to   {Theorem} \ref{Sec3-theore1}  we can show
 \begin{eqnarray}\label{new-Ex-F}
&& \mathbb{E}\left[ F(\m{w}_T)
-F(\m{w})+(\m{w}_T-\m{w})\tr \mathcal{J}(\m{w})\right]\\
&&\leq
\frac{1}{2T}\bigg\{ \sigma^2\sum\limits_{k=\kappa}^{\kappa+T} \eta_k m_k
+ \frac{4}{m_\kappa(m_\kappa+1)\eta_\kappa} \left\|\m{x}- \breve{\m{x}}^\kappa\right\|_{\C{H}}^2 + \left\|\m{w}-\m{w}^\kappa\right\|^2_{\tilde{Q}_\kappa} \nonumber\\
&& ~   + \omega_1 \left\|A\m{x}^\kappa+B\m{y}^\kappa-\m{b}\right\|^2+ \omega_2\textbf{const}  \bigg\}, \nonumber
\end{eqnarray}
where
\[
\textbf{const}=\left \{\begin{array}{lll}
 \C{N}_{\C{Y}}^2(\beta\|B\|^2-\tau\chi), &\textrm{if }~ \tau\in[-1,0],\\
 \beta(\C{N}_{\C{Y}}\|B\|)^2+\tau\chi \left\|\m{y}^{\kappa-1}-\m{y}^\kappa\right\|^2,&\textrm{if }~ \tau\in(0,1].
\end{array}\right.
\]
This means that the results of Theorem \ref{2-sec-bj1} could still hold even when the proximal matrix $L$ is positive-indefinite.

{(III) Similar ideas of Algorithm~\ref{algo1} can be further generalized to solve separable convex optimization with one or multi-block structures.
For content focus of the paper, we leave these discussions in the Appendix.}
}
\end{remark}

\section{Numerical experiments}\label{NumExp}
In this section, we {apply the proposed algorithm} to solve the following graph-guided fused lasso problem
 in machine learning:
\[
\min\limits_{\mathbf{x}}\frac{1}{N}\sum\limits_{j=1}^{N}f_j(\mathbf{x})+\mu\|A\mathbf{x}\|_1,
\]
where  $f_j(\mathbf{x})=\log\left(1+\exp(-{b_ja_j\tr} \mathbf{x})\right)$ denotes the logistic loss function on the feature-label pair $(a_j,b_j)\in \mathbb{R}^{l}\times\{-1,1\}$,  $N{(> l)}$ is the data size , $\mu>0$ is a given  regularization parameter, and $A=[\mathbf{G};\mathbf{I}]$ is a matrix encoding the feature sparsity pattern. Here, $\mathbf{G}$ is the sparsity pattern of the graph that is obtained by sparse inverse covariance estimation \cite{FHTR08}.
Introducing an auxiliary variable $\m{y}$, the above problem is equivalent to the problem
\begin{equation} \label{Sec5-prob}
\begin{array}{lll}
\min\limits_{\mathbf{x,y}}&F(\mathbf{x},\mathbf{y}):=\frac{1}{N}\sum\limits_{j=1}^{N}f_j(\mathbf{x})+\mu\|\mathbf{y}\|_1\\
\textrm{s.t. } &   A\mathbf{x}-\mathbf{y}=\mathbf{0},\\
\end{array}
\end{equation}
which has the format of our {model   (\ref{P}).}
In addition, it can be easily verified that the Assumptions~\ref{asum-1}-\ref{asum-2} hold.
Since the coefficient matrix of the $\m{y}$ variable  in the constraints of (\ref{Sec5-prob})
is $- \m{I}$, the $\m{y}$-subproblem will have a closed-form solution by
simply setting $L= \m{0}$ in   Algorithm~\ref{algo1}.
Otherwise, the linearization techniques  discussed in (\ref{prox-y-problem}) on choosing
 $L$ can be applied to obtain a closed-form solution of the $\m{y}$-subproblem.
With $L= \m{0}$, the subproblems in Algorithm~\ref{algo1} would have the following
closed-form solution:
\begin{equation} \label{Sec5-subxy}
\left \{\begin{array}{lll}
\breve{\m{x}}_{t+1}&=&\left[\gamma_t \C{H}+ \C{M}_k\right]^{-1}\left[\gamma_t \C{H}\breve{\m{x}}_{t} + \C{M}_k \mathbf{x}^k - \m{d}_t- \m{h}^k\right],\\
\mathbf{y}^{k+1}&=&\textrm{Shrink}\left(\frac{\mu}{\beta},  A\mathbf{x}^{k+1}-\frac{\LAMBDA^{k+\frac{1}{2}}}{\beta}\right).
\end{array}\right.
\end{equation}
Here, $\textrm{Shrink}(\cdot,\cdot)$ denotes the soft shrinkage operator and can be
evaluated using the MATLAB built-in function ``\verb"wthresh"''.

In the numerical experiments, the penalty parameter in SAS-ADMM is taken as $\beta = 0.001$,
{the matrices $ \C{M}_k$ are updated adaptively by the strategy explained
in Remark 4.1 (I) with initial values $\rho_0 = 1$,
$\rho_{\min} = 10^{-5}$ and  $\C{H}=2\times 10^{-5}\m{I}$.
The other  parameters as well as the vector $\m{e}_t$  in SAS-ADMM (i.e. Algorithm~\ref{algo1})
 are chosen the same way as that used in \cite[Section 7.1]{BaiHz2019}, that is
\begin{equation}\label{et-set}
\m{e}_t =
\left \{\begin{array}{ll}
\nabla f(\m{x}_{k-1}) - \nabla f_{\xi_t} (\m{x}_{k-1})
& \mbox{if } ~ m_k >  l, \\
\m{0}, & \mbox{otherwise},
\end{array}\right.
\end{equation}
where $\m{x}_k$ is the ergodic mean of the $\m{x}$-iterates.
 Motivated from Theorem \ref{2-sec-bj1}}, we use
\[
\mbox{Obj\_err} = \frac{|F(\m{x}, \m{y})-F^*|}{\max\{F^*,1\}} \qquad \mbox{and} \qquad
\mbox{Equ\_err} = \left\| A \m{x} -\m{y} \right\|
\]
to denote the relative objective value error and the constraint violation error.
Here, $F^*$ is the approximate optimal objective function value
obtained by running Algorithm \ref{algo1} for more than $10$ minutes.
To measure the performance of a algorithm, we plot the maximum of
the relative objective error and the constraint error, that is
\[
\mbox{Opt\_err}=\max(\mbox{Obj\_err},\mbox{Equ\_err}),
\]
against the CPU time used.
 All experiments are implemented in MATLAB R2018a (64-bit)
with the same starting point
$(\mathbf{x}^0,\mathbf{y}^0,\LAMBDA^0)=(\mathbf{0},\mathbf{0},\mathbf{0})$
and   performed on a PC with Windows 10 operating system,
with an Intel i7-8700K CPU  and  16GB RAM.

\begin{figure}[htbp]
 \begin{minipage}{1\textwidth}
 \def\figurename{\footnotesize Fig.}
 \centering
 \resizebox{12cm}{5cm}{\includegraphics{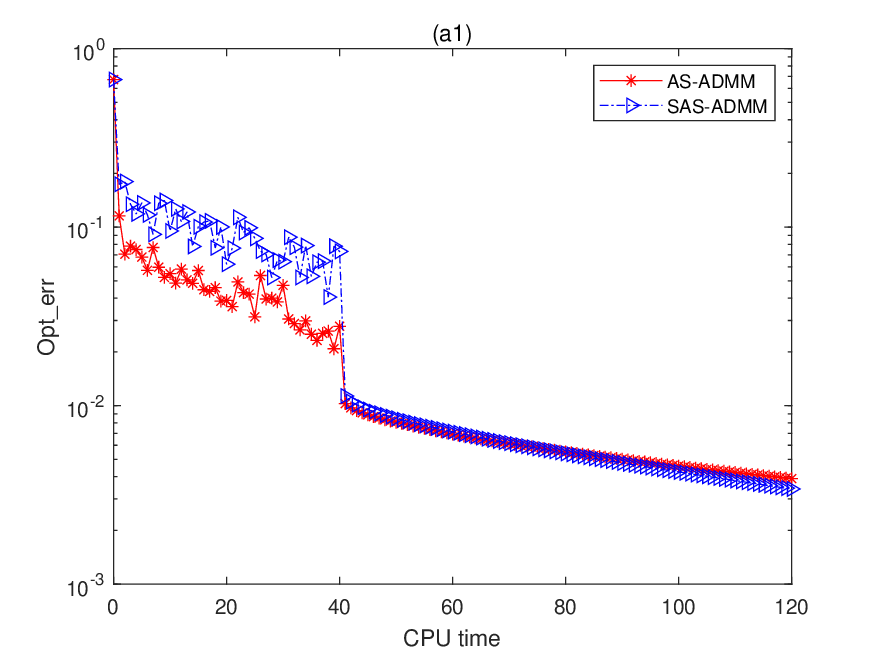}\includegraphics{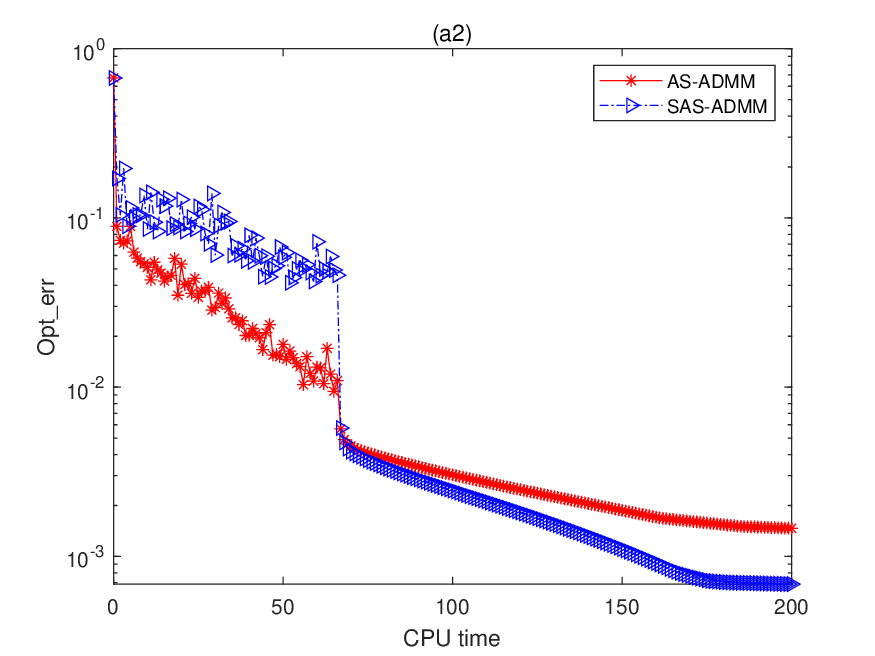}}
\resizebox{12cm}{5cm}{\includegraphics{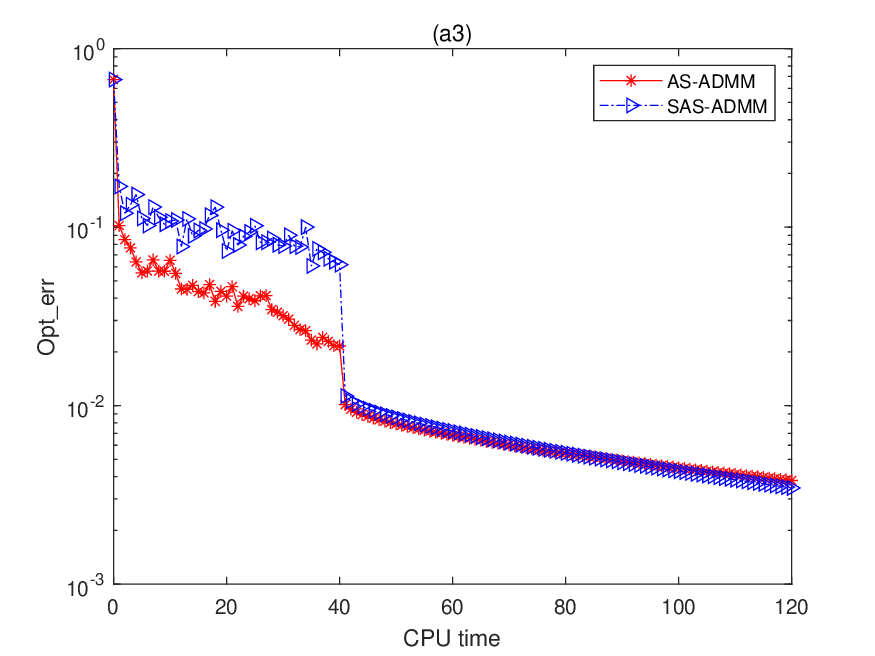}\includegraphics{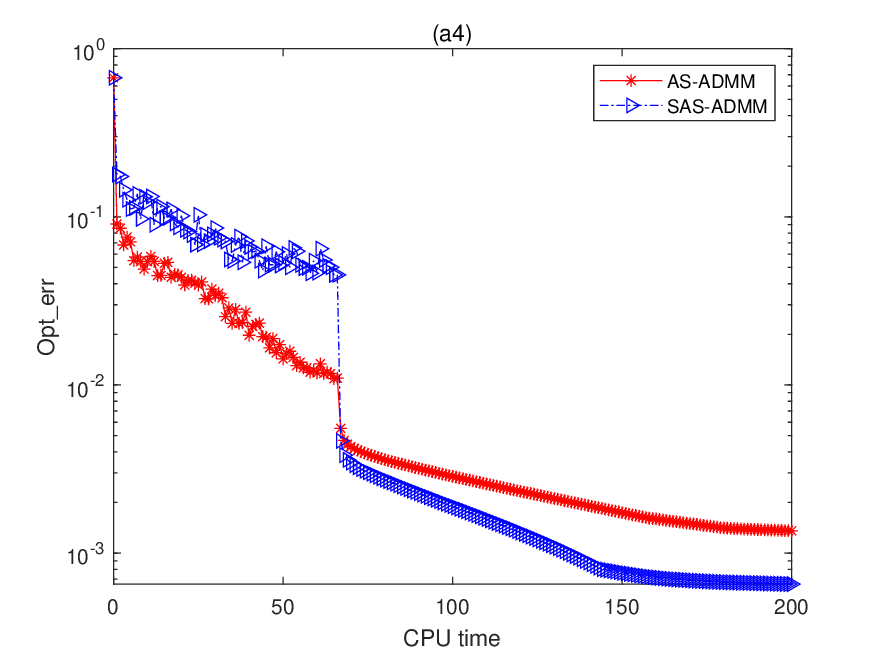}}
\caption{\footnotesize  Comparison of $\mbox{Opt\_err}$ vs CPU time
for  Problem (\ref{Sec5-prob}) on the  {\emph{mnist}} dataset: (a1)-(a2) are the results after $10$ successive runs; (a3)-(a4) are the results after $20$ successive runs} \label{FigProb1-1}
   \end{minipage}
\end{figure}

 \begin{figure}[htbp]
 \begin{minipage}{1\textwidth}
 \def\figurename{\footnotesize Fig.}
 \centering
 \resizebox{13.5cm}{4cm}{\includegraphics{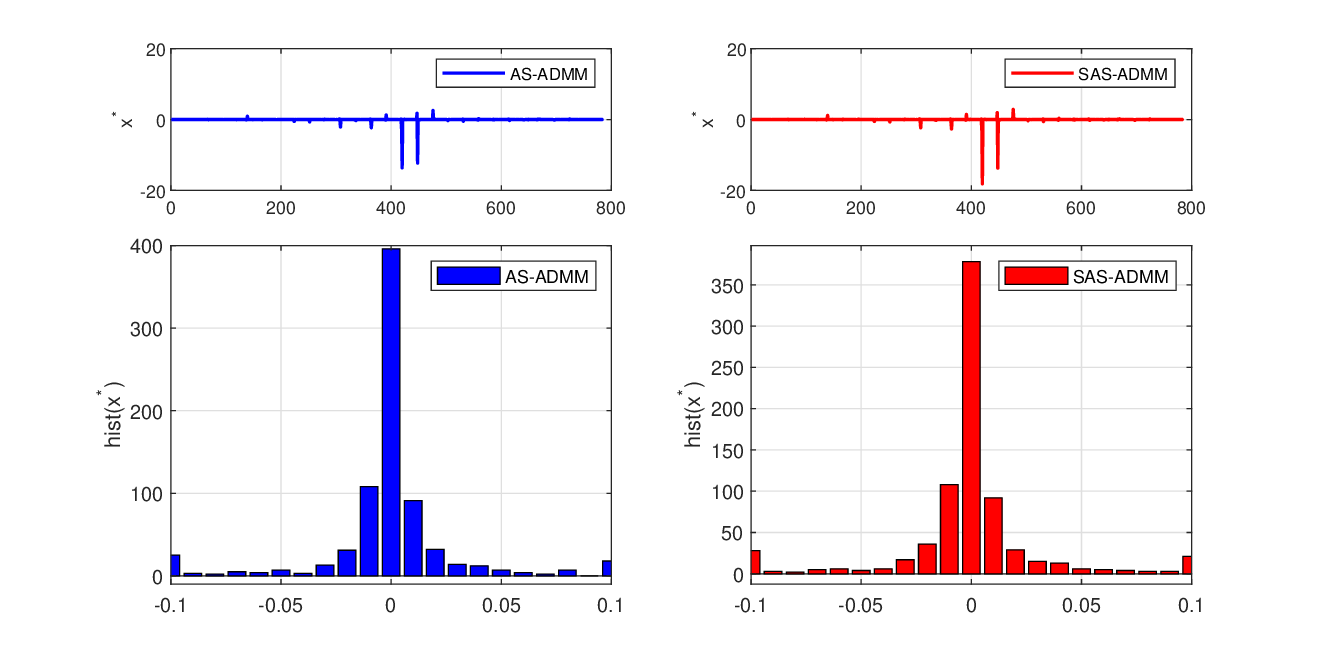}\includegraphics{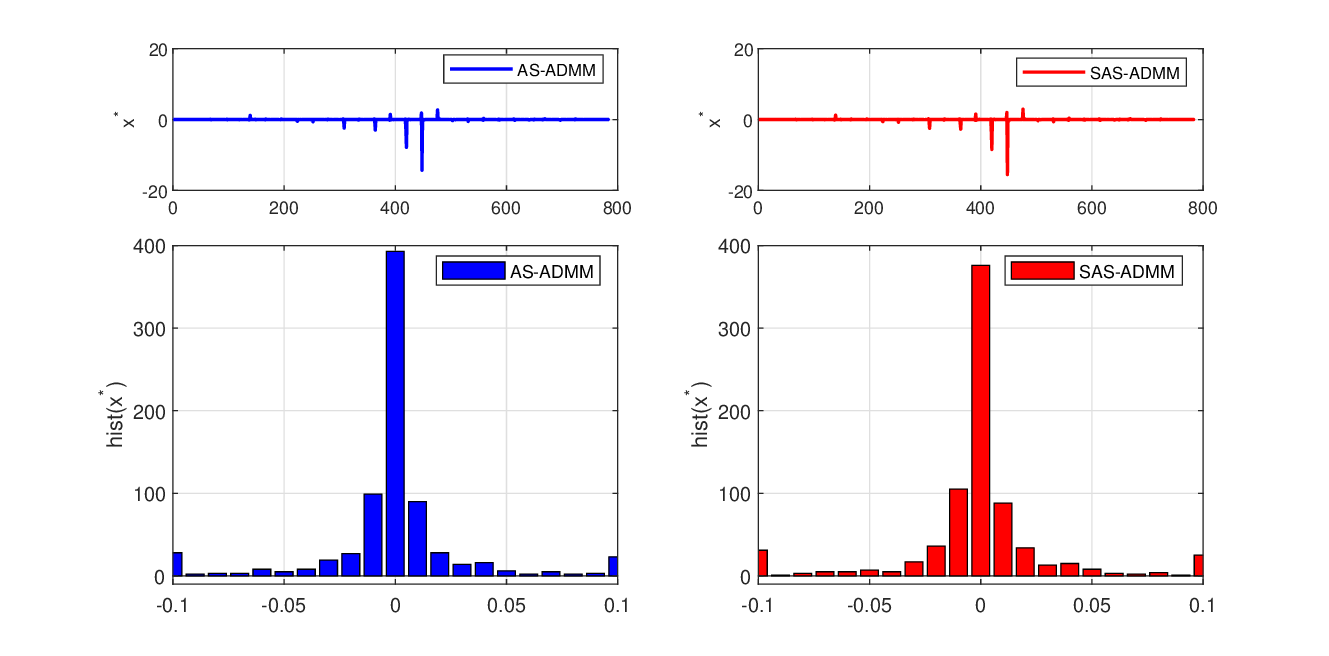}}
 \caption{\footnotesize  Comparison of the finally obtained iterate $\m{x}^{k+1}$ and $\textrm{hist}(\m{x}^{k+1})$ after $10$ successive runs: the left two  subfigures correspond  to (a1); the right two  subfigures  correspond  to (a2)} \label{FigProb1-b1}
   \end{minipage}
\end{figure}

 \begin{figure}[htbp]
 \begin{minipage}{1\textwidth}
 \def\figurename{\footnotesize Fig.}
 \centering
 \resizebox{13.5cm}{4cm}{\includegraphics{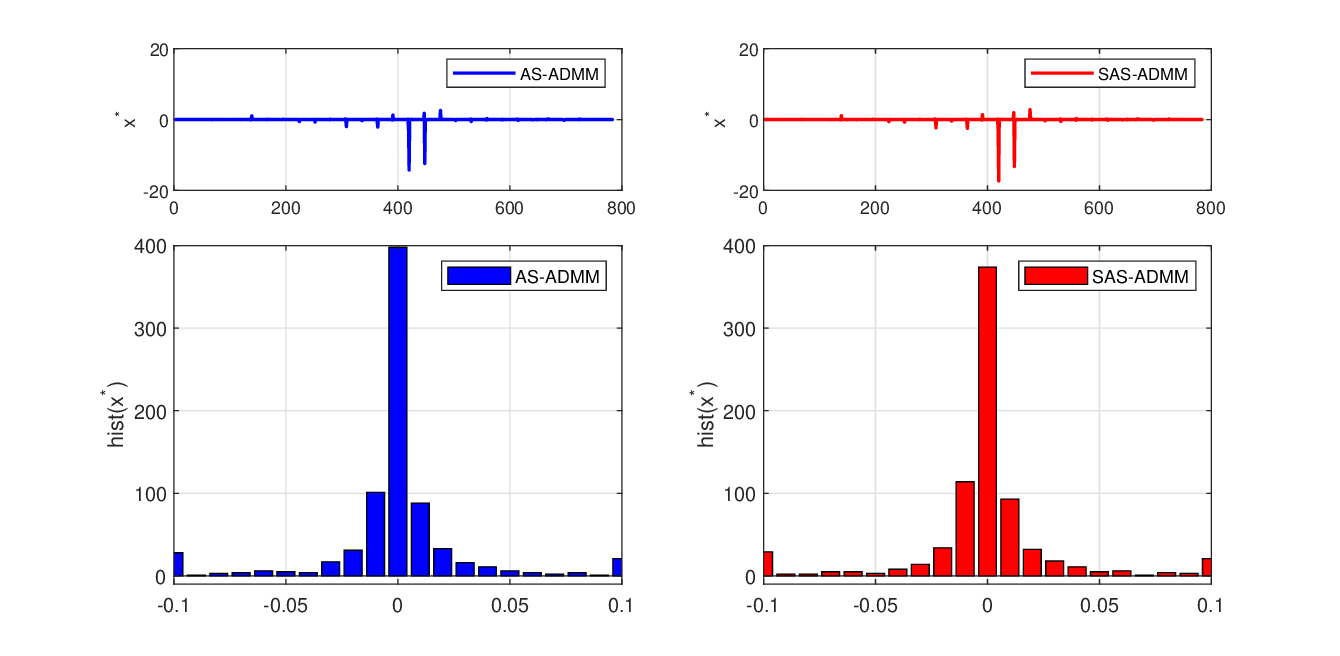}\includegraphics{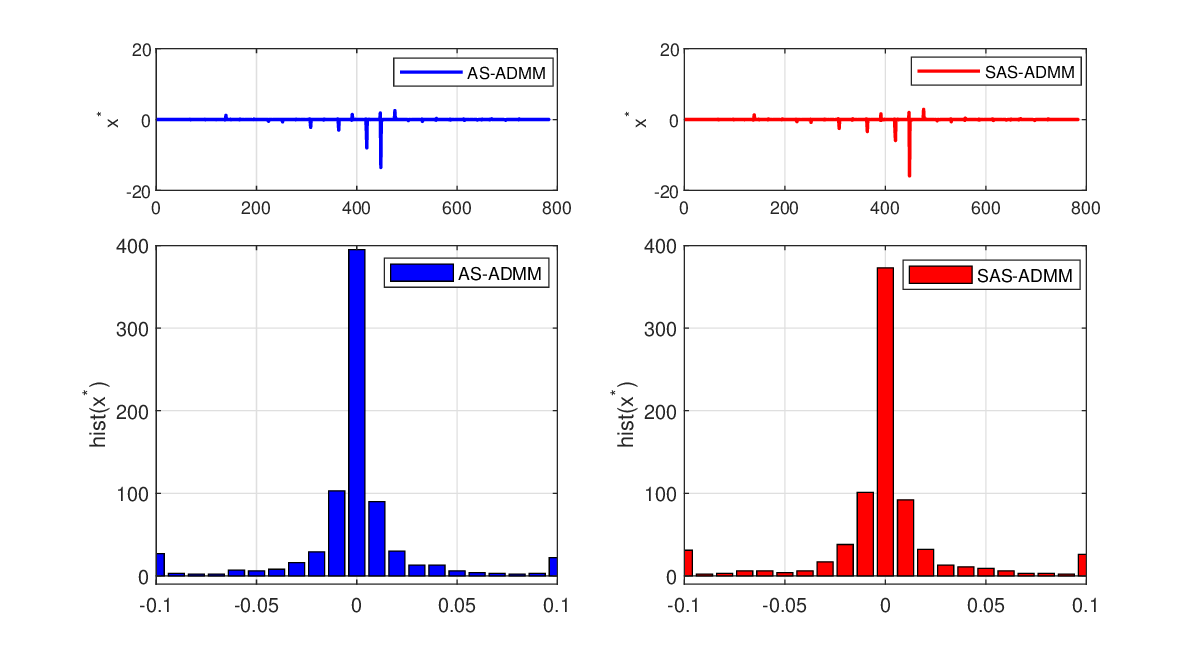}}
   \caption{\footnotesize   Comparison of the   finally obtained iterate $\m{x}^{k+1}$ and $\textrm{hist}(\m{x}^{k+1})$ after $20$ successive runs: the left two   subfigures  correspond  to (a3); the right two   subfigures  correspond  to (a4)} \label{FigProb1-b2}
    \end{minipage}
\end{figure}

We compare the numerical performance of the proposed algorithm SAS-ADMM\footnote{All codes are available at https://github.com/bjc1987/bjc1987.github.io
} using
stepsizes $(\tau,s)=(0.9,1.09)$, which is suggested in \cite{BLXZ2018} for GS-ADMM,
and AS-ADMM \cite{BaiHz2019}
for solving  problem (\ref{Sec5-prob}) on the
 dataset \emph{mnist} (including 11,791 samples and 784 features, {that is, $(N,l)=(11791,784)$}) downloaded from
 LIBSVM website.
 The regularization parameter $\mu$ in (\ref{Sec5-prob}) is set as $10^{-5}$.
 For both SAS-ADMM and AS-ADMM, we plot the error associated
with the iterates over the first 1/3 of the total CPU time budget, followed by the
error associated with the ergodic iterates over the last 2/3 of the budget.
 We { make $10$ and  $20$ successive runs of each algorithm under the CPU time budgets $120s$ and $200s$, respectively.
The average comparison results on $\mbox{Opt\_err}$ are shown in Figure  \ref{FigProb1-1}, and the comparison of the finally obtained iterative solution $\m{x}^{k+1}$ and $\textrm{hist}(\m{x}^{k+1})$  are shown in  Figures \ref{FigProb1-b1}-\ref{FigProb1-b2}.
 Here, we only compare SAS-ADMM with AS-ADMM since in \cite{BaiHz2019}  AS-ADMM was shown competitive or better than
other state-of-the-art deterministic and stochastic methods.}
Note that $\mbox{Opt\_err}$ has a big drop at around 1/3 of the CPU time budget, the
point where the ergodic iterates are started to use for reporting the objective value. From Figure \ref{FigProb1-1}, we can see that SAS-ADMM initially performs worse than AS-ADMM at the beginning iterations. But after the first $1/3$ of the total CPU time budget, the SAS-ADMM eventually seems to perform
better than AS-ADMM. {Finally, Figures \ref{FigProb1-b1}-\ref{FigProb1-b2} show that both
the comparison algorithms indeed get sparse solutions.}

\section{Conclusion}
We {proposed} a symmetric accelerated stochastic
alternating direction method of multipliers, called SAS-ADMM, whose dual variables are symmetrically
updated. We gave the specific dual stepsizes region
ensuring the global convergence, which is larger than those in the literature.
Under proper choice of the algorithm parameters, we proved the convergence of  SAS-ADMM in expectation with the worst-case $O(1/T)$ convergence rate, where $T$  represents  the number of iterations.
Our preliminary experiments showed that by symmetrically updating the dual variables
using a more flexible region, SAS-ADMM could outperform AS-ADMM, which only updates the
dual variable once, for solving some
structured optimization problems arising in machine learning.

\section{Appendix: further discussions}
In this section, we discuss   3-block extensions of Algorithm~\ref{algo1} and
its variance of a stochastic augmented Lagrangian method.

\subsection{A stochastic ALM}
We first consider a stochastic augmented Lagrangian method, a variant of SAS-ADMM, to solve
\begin{equation} \label{ALM-Prob}
\min \{ f(\m{x})  \mid
A\m{x} =\m{b},~ \m{x}\in \C{X} \},
\end{equation}
where  $\C{X} \subset \mathbb{R}^{n_1}$ is a closed  convex  subset,
and $f$ is an average of $N$ smooth convex functions as defined in (\ref{P}).
Now, the augmented Lagrangian of (\ref{ALM-Prob}) is
\[
\C{L}_\beta\left( \m{x}, \LAMBDA\right):  =  \C{L}\left( \m{x},\LAMBDA\right)
+\frac{\beta}{2} \left\| A\m{x}-\m{b}\right\|^2,
\]
where  $\C{L}\left( \m{x},\LAMBDA\right)= f(\m{x})- \LAMBDA\tr(A\m{x} -\m{b})$.
Then, based on Algorithm \ref{algo1}, we can propose the following
Accelerated Stochastic ALM (AS-ALM), Algorithm \ref{algo1-ALM}).
Similar to SAS-ADMM,  we can easily establish the following lemmas   on AS-ALM.
However, in this case, the convergence region for the dual stepsize
can be enlarged from $(0, (\sqrt{5}+1)/2]$ of AS-ADMM \cite{BaiHz2019} to $(0,2]$.

\renewcommand\figurename{Alg.}
\begin{figure}
\begin{center}
{ \tt
\begin{tabular}{l}
\hline
\\
{\bf Parameters:}
$\beta>0, s\in (0,2]$ and $  \C{H} \succ \m{0}$.\\
{\bf Initialization:}
$(\m{x}^0,\LAMBDA^0)$
$\in\C{X}\times \mathbb{R}^n:=\Omega$ and $ {\breve{\m{x}}^0=\m{x}^0}$.\\
For  $k=0,1,\ldots$ \\
\hspace*{.3in}Choose $m_k >0$, $\eta_k>0$ and
$ \C{M}_k $ such that $\C{M}_k - \beta A \tr A  \succeq \m{0}$.\\
\hspace*{.3in}$\m{h}^k :=$
$-A\tr \left[\LAMBDA^k-\beta(A\m{x}^k-\m{b})\right]$. \\
\hspace*{.3in}$(\m{x}^{k+1}, \breve{\m{x}}^{k+1}) =$
{\bf xsub} ($\m{x}^k, \breve{\m{x}}^k, \m{h}^k)$ with {\bf xsub} given  in ALG.\ref{algo1}. \\
\hspace*{.3in}$\LAMBDA^{k+1}=\LAMBDA^{k}-
s \beta\left(A\m{x}^{k+1}-\m{b}\right).$\\
end\\
\hline
\end{tabular}
}
\end{center}
\caption{ Accelerated  Stochastic ALM (AS-ALM)}\label{algo1-ALM}
\end{figure}
\renewcommand\figurename{Fig.}

\begin{lemma} \label{Sec31-3-ALM}
Let $\{ \m{x}^k \}$ be  generated by Algorithm~\ref{algo1-ALM} and $\eta_k \in (0, 1/\nu)$.
Then, the inequality (\ref{Sec3-5}) holds with
\begin{equation}\label{AS-ALM-tilde-lambda}
 \widetilde{\LAMBDA}^k=\LAMBDA^k-\beta \left( A \m{x}^{k+1}  - \m{b} \right).
\end{equation}
\end{lemma}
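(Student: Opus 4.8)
The plan is to reduce the statement directly to Lemma~\ref{Sec31-3}, exploiting the fact that the $\mathbf{xsub}$ subroutine called in Algorithm~\ref{algo1-ALM} is verbatim the one used in Algorithm~\ref{algo1}, and that Lemma~\ref{Sec31-3} (i.e.\ \cite[Lemma~3.2]{BaiHz2019}) is in essence a statement about a single call $(\m{x}^{k+1},\breve{\m{x}}^{k+1})=\mathbf{xsub}(\m{x}^k,\breve{\m{x}}^k,\m{h}^k)$ that depends on the constraint data $(A,\m{b})$ only through the input vector $\m{h}^k$. First I would record the algebraic identity tying $\m{h}^k$ to the auxiliary multiplier. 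With the AS-ALM definitions $\m{h}^k=-A\tr[\LAMBDA^k-\beta(A\m{x}^k-\m{b})]$ and $\widetilde{\LAMBDA}^k=\LAMBDA^k-\beta(A\m{x}^{k+1}-\m{b})$, a one-line computation gives
\[
-A\tr\widetilde{\LAMBDA}^k=\m{h}^k+\beta A\tr A(\m{x}^{k+1}-\m{x}^k).
\]
Exactly the same identity holds in the SAS-ADMM setting of Lemma~\ref{Sec31-3}: the extra term $\beta A\tr B\m{y}^k$ enters $\m{h}^k$ and $-A\tr\widetilde{\LAMBDA}^k$ in the same way and cancels in the difference. Thus the left-hand side of (\ref{Sec3-5}) can, in both settings, be rewritten as $\langle\m{x}-\m{x}^{k+1},\,\m{h}^k+\beta A\tr A(\m{x}^{k+1}-\m{x}^k)\rangle$, an expression depending on $\m{h}^k$ alone; and the shift $\beta A\tr A$ is precisely what turns $\C{M}_k$ into $\C{D}_k=\C{M}_k-\beta A\tr A$ on the right-hand side.

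With this observation, inequality (\ref{Sec3-5}) for Algorithm~\ref{algo1-ALM} becomes literally the conclusion of \cite[Lemma~3.2]{BaiHz2019} applied to the call $\mathbf{xsub}(\m{x}^k,\breve{\m{x}}^k,\m{h}^k)$ with the parameters $m_k,\eta_k,\C{M}_k$ and proximal weight $\C{H}$ prescribed in Algorithm~\ref{algo1-ALM}. I would then verify that the hypotheses are in force: $\C{M}_k-\beta A\tr A\succeq\m{0}$ (so $\C{D}_k\succeq\m{0}$) is imposed in the algorithm, $\eta_k\in(0,1/\nu)$ is assumed in the lemma, and Assumption~\ref{asum-2} supplies the Lipschitz modulus $\nu$ used in the definition of $\zeta^k$. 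Since the derivation of Lemma~\ref{Sec31-3} never touches the $\m{y}$-update, the matrix $B$, or the dual stepsizes --- it uses only the inner accelerated stochastic-gradient recursion on $\breve{\m{x}}_t,\m{x}_t,\hat{\m{x}}_t$ driven by $\m{d}_t+\m{h}^k$ together with the proximal terms $\C{H}$ and $\C{M}_k$ --- the identical chain of estimates produces (\ref{Sec3-5}) with $\widetilde{\LAMBDA}^k$ as in (\ref{AS-ALM-tilde-lambda}).

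There is no real obstacle here; the essence of the argument is that the $\mathbf{xsub}$ analysis is blind to how $\m{h}^k$ was assembled from the constraint data. The only point worth spelling out is the bookkeeping that $\m{h}^k$ is the sole channel through which $A$ and $\m{b}$ reach $\mathbf{xsub}$, so that the two algorithms return the same pair $(\m{x}^{k+1},\breve{\m{x}}^{k+1})$ from the same $\m{h}^k$, together with the elementary identity displayed above, which is precisely the translation between the $\mathbf{xsub}$ output and the auxiliary multiplier $\widetilde{\LAMBDA}^k$ that already underlies Lemma~\ref{Sec31-3}. Once this is made explicit, the remaining estimates coincide with those in the proof of \cite[Lemma~3.2]{BaiHz2019}, and I would omit them.
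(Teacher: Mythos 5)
Your proposal is correct and matches the paper's intent: the paper states this lemma without proof, implicitly relying on exactly the reduction you make explicit, namely that the \textbf{xsub} routine sees the constraint data only through $\m{h}^k$ and that the identity $-A\tr\widetilde{\LAMBDA}^k=\m{h}^k+\beta A\tr A(\m{x}^{k+1}-\m{x}^k)$ (which converts $\C{M}_k$ into $\C{D}_k$) holds verbatim in both the ADMM and ALM settings. Your write-up supplies the bookkeeping the paper omits, but the route is the same.
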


For the iterates generated by Algorithm~\ref{algo1-ALM}, in this subsection let
$\m{w}^k=(\m{x}^k; \LAMBDA^k)$ and $ \widetilde{\m{w}}^k=(\m{x}^{k+1}; \widetilde{\LAMBDA}^k)$,
where $ \widetilde{\LAMBDA}^k$ is defined in (\ref{AS-ALM-tilde-lambda}).
Then, we have the following lemma.
\begin{lemma} \label{Sec31-bz6-ALM}
Let   $\{ \m{w}^k \}$ be  generated by Algorithm~\ref{algo1-ALM} and $\eta_k \in (0, 1/\nu)$.
Then,  we have $ \widetilde{\m{w}}^k \in \Omega$ and
\[
f(\m{x})-f(\m{x}^{k+1}) +
\left\langle \m{w} - \widetilde{\m{w}}^k, \C{J}(\m{w})\right\rangle
\geq  ( \m{w}-\widetilde{\m{w}}^k)\tr Q_k (\m{w}^k-\widetilde{\m{w}}^k) + \zeta^k
 \]
for all $\m{w} \in \Omega$,  where $\zeta^k$ is given by (\ref{zeta_k}),
\[
\C{J}(\m{w})=\left(\begin{array}{c}
  -A\tr  \LAMBDA  \\   A\m{x} -\m{b}
\end{array}\right) \quad \mbox{ and } \quad Q_k=\left[\begin{array}{ccc}
           \C{D}_k  &&  \\
            & &    \frac{1}{\beta} \m{I}
\end{array}\right].
\]
\end{lemma}
\begin{proof}
Combining the inequality (\ref{Sec3-5}) and the   relation
$
 A \m{x}^{k+1}  - \m{b} =\frac{1}{\beta}(\LAMBDA^k-\widetilde{\LAMBDA}^k)
$
gives the results.
\end{proof}
\begin{lemma} \label{key-lemma-ALM}
Let $\{ \m{w}^k \}$ be  generated by Algorithm~\ref{algo1-ALM} and $\eta_k\in(0,  {1}/{\nu})$.
Then, for any $\m{w} \in \Omega$,  we have
\begin{eqnarray*}
&&
f(\m{x})-f(\m{x}^{k+1}) + ( \m{w} - \widetilde{\m{w}}^k)\tr \C{J}(\m{w})\\
& &\ge
\frac{1}{2}\left\{\left\|\m{w}-\m{w}^{k+1}\right\|^2_{\widetilde{Q}_k}-\left\|\m{w}-\m{w}^{k}\right\|^2_{\widetilde{Q}_k}+  \left\|\m{w}^k-\widetilde{\m{w}}^k \right\|_{\widetilde{G}_k}^2 \right\} + \zeta^k,
\end{eqnarray*}
where $\zeta^k$ is given by (\ref{zeta_k}),
\[
\widetilde{Q}_k=\left[\begin{array}{ccc}
           \C{D}_k  &&  \\
            & &    \frac{1}{s \beta} \m{I}
\end{array}\right] \quad \mbox{ and } \quad
\widetilde{G}_k=\left[\begin{array}{ccc}
           \C{D}_k  &&  \\
            & &    \frac{2-s}{ \beta} \m{I}
\end{array}\right].
\]
\end{lemma}
\begin{proof}
The proof is similar to that of Corollary \ref{coll37} and is omitted.
\end{proof}

Finally, under the   conditions of Theorem~\ref{2-sec-bj1},   by {Lemma}~\ref{key-lemma-ALM} and
a similar proof of Theorem~\ref{2-sec-bj1}, we can deduce that for any
$\m{w}_T :=\frac{1}{1+T}\sum_{k=\kappa}^{\kappa+T}\tilde{\m{w}}^{k}$ and $\kappa\geq 0$, it has
   \[
 \left|\mathbb{E}\left[ f(\m{x}_T)-f(\m{x}^*)\right]\right| = E_\varrho(T) =
\mathbb{E} \big[\left\|A\m{x}_{T} -\m{b} \right\|\big].
  \]
where $E_\varrho(T) = \C{O} (1/T)$ for $\varrho > 1$ and
$E_\varrho(T) = \C{O} (T^{-1}\log T)$ for $\varrho = 1$.

\subsection{Three-block extensions} \label{diss-sec}

Consider {a} 3-block extension of problem (\ref{P})
\begin{equation} \label{Sec33-Prob}
\begin{array}{lll}
\min   & F(\m{w}):=f(\m{x})+g(\m{y})+l(\m{z})\\
\textrm{s.t. } &\C{K}\m{w}:=A\m{x}+B\m{y}+ C\m{z}=\m{b},\\
& \m{x}\in \C{X},\ \m{y}\in \C{Y},\ \m{z}\in \C{Z},
\end{array}
\end{equation}
where  $ l$ is  a closed convex function,  $C\in \mathbb{R}^{n \times n_3}$ is a given   matrix,
$\C{Z} \subset \mathbb{R}^{n_3}$ is a  simple  closed  convex  subset,  and the other functions and variables
remain the same definitions as those in problem (\ref{P}).
{ Here, the additional function $l$ can be possibly} used to
promote some data structure different from the structure promoted by $g$.
For convenience, in this subsection, let us define $\C{K}\m{w}:= A\m{x}+B\m{y}+ C\m{z}$,
denote $\m{w}=(\m{z};\m{x};\m{y}; \LAMBDA)$,
\begin{equation}\label{Sec4-vector-w-v1211}
\quad~~\m{w}^k=\left(\begin{array}{c}
 \m{x}^k\\  \m{y}^k \\ \m{z}^k\\    \LAMBDA^k
\end{array}\right),
  \widetilde{\m{w}}^k:=\left(\begin{array}{c}
   \widetilde{\m{x}}^k\\ \widetilde{\m{y}}^k\\\widetilde{\m{z}}^k\\
   \widetilde{\LAMBDA}^{k}
\end{array}\right)=
\left(\begin{array}{c}
   \m{x}^{k+1}\\ \m{y}^{k+1}\\ \m{z}^{k+1}\\
  \widetilde{\LAMBDA}^{k}
\end{array}\right) \mbox{ and }
\C{J}(\m{w})=\left(\begin{array}{c}
-A\tr \LAMBDA\\ -B\tr\LAMBDA\\ -C\tr \LAMBDA\\ \C{K}\m{w}-\m{b}
\end{array}\right),
\end{equation}
where $\widetilde{\LAMBDA}^k$ will be specified differently in the following two discussion cases.

\subsubsection{Extension in  Gauss-Seidel update}
For this case, we need an assumption that $C \tr A = \m{0}$.
Then SAS-ADMM can be directly extended to Algorithm \ref{algo51} for solving the 3-block problem
 (\ref{Sec33-Prob}), where the variable updating order is $\m{z}^{k+1}\rightarrow \m{x}^{k+1}\rightarrow\m{y}^{k+1}\rightarrow\LAMBDA^{k+1}$ in a Gauss-Seidel scheme. Now, let{
\begin{eqnarray}
&&  \widetilde{\LAMBDA}^{k}= \LAMBDA^k- \beta \left(C\m{z}^{k+1}+A\m{x}^{k+1}+B\m{y}^{k}-\m{b}\right), \label{Sec51-0bz1}\\
&& \m{v}^{k+1}=(\m{x}^{k+1};\m{y}^{k+1};  \LAMBDA^{k+1})~~
\mbox{ and }~~ \widetilde{\m{v}}^k=(\widetilde{\m{x}}^k; \widetilde{\m{y}}^k; \widetilde{\LAMBDA}^{k}). \label{Sec51-vk}
\end{eqnarray}
Then,} we have the following main {lemma} for the convergence of Algorithm~\ref{algo51}.
\renewcommand\figurename{Alg.}
\begin{figure}
\begin{center}
{ \tt
\begin{tabular}{l}
\hline
\\
{\bf Parameters:}
$\beta>0, ~ \C{H} \succ \m{0},~ L\succeq \m{0} $ and $(\tau,s)\in \Delta$.\\
{\bf Initialization:}
${(\m{x}^0,\m{y}^0,\m{z}^0,\LAMBDA^0) \in\C{X}\times\C{Y}\times \C{Z}\times}\mathbb{R}^n:=\Omega,~ {\breve{\m{x}}^0=\m{x}^0}$.\\
For  $k=0,1,\ldots$ \\
\hspace*{.3in}Choose $m_k>0$, $\eta_k > 0$ and
$ \C{M}_k $ such that $\C{M}_k - \beta A \tr A  \succeq \m{0}$.\\
\hspace*{.3in}$\m{z}^{k+1}\in\arg\min\limits_{\m{z} \in \C{Z}} l(\m{z})+ \frac{\beta}{2}
\left\| C\m{z}+ A\m{x}^{k}+B\m{y}^k-\m{b}-\frac{\LAMBDA^{k}}{\beta}
\right\|^2 $.\\
\hspace*{.3in}$\m{h}^k :=$
$-A\tr \left[\LAMBDA^k-\beta(C\m{z}^{k+1}+ A\m{x}^k+B\m{y}^k-\m{b})\right]$. \\
\hspace*{.3in}$(\m{x}^{k+1}, \breve{\m{x}}^{k+1}) =$
{\bf xsub} ($\m{x}^k, \breve{\m{x}}^k, \m{h}^k)$ with {\bf xsub} given in ALG.\ref{algo1}. \\
\hspace*{.3in}$\LAMBDA^{k+\frac{1}{2}}=\LAMBDA^k-
\tau \beta\left(C\m{z}^{k+1}+A\m{x}^{k+1}+B\m{y}^{k}-\m{b}\right)$.\\
\hspace*{.3in}{\small$\m{y}^{k+1}\in\arg\min\limits_{\m{y} \in \C{Y}}  g(\m{y})+ \frac{\beta}{2}
\left\| C\m{z}^{k+1}+ A\m{x}^{k+1}+B\m{y}-\m{b}-\frac{\LAMBDA^{k+\frac{1}{2}}}{\beta}
\right\|^2 +\frac{1}{2}\left\|\m{y}-\m{y}^k\right\|_{L}^2   $}.\\
\hspace*{.3in}$\LAMBDA^{k+1}=\LAMBDA^{k+\frac{1}{2}}-
s \beta\left(A\m{x}^{k+1}+B\m{y}^{k+1}+C\m{z}^{k+1}-\m{b}\right).$\\
end\\
\hline
\end{tabular}
}
\end{center}
\caption{ Extension of SAS-ADMM in Gauss-Seidel update }\label{algo51}
\end{figure}
\renewcommand\figurename{Fig.}

\begin{lemma} \label{Sec51-bz2}
Assume $C\tr A=\m{0}$ and $\eta_k \in (0, 1/\nu)$. Then, the iterates
generated by Algorithm~\ref{algo51} satisfy
  $\widetilde{\m{w}}^k\in \Omega$ and
\begin{eqnarray*}
&&F(\m{w})-F(\widetilde{\m{w}}^k) +
\left( \m{w} - \widetilde{\m{w}}^k\right)\tr \C{J}(\m{w}) \\
&&\ge
\frac{1}{2}\left\{\left\|\m{v}-\m{v}^{k+1}\right\|^2_{\widetilde{Q}_k}-\left\|\m{v}-\m{v}^{k}\right\|^2_{\widetilde{Q}_k}+  \left\|\m{v}^k-\widetilde{\m{v}}^k \right\|_{\widetilde{G}_k}^2 \right\} + \zeta^k
\end{eqnarray*}
  for any $\m{w} \in \Omega$,  where   $\widetilde{Q}_k,\widetilde{G}_k$ and $\zeta^k$ are
   given in Corollary \ref{coll37} and (\ref{zeta_k}), respectively.
Moreover, we have
 \begin{eqnarray*}
&& \left\|\m{v}^k-\tilde{\m{v}}^k\right\|_{\tilde{G}_k}^2 \geq
\left\| \m{x}^k - \m{x}^{k+1}\right\|_{\C{D}_k}^2
 +\omega_0 \left\| A\m{x}^{k+1}+B\m{y}^{k+1}-\m{b} \right\|^2  \\
& &
+ \omega_1 \left(\left\| A\m{x}^{k+1}+B\m{y}^{k+1}-\m{b} \right\|^2 -\left\| A\m{x}^k+B\m{y}^k-\m{b} \right\|^2 \right) \\
&&+ \omega_2\left( \left\|\m{y}^k-\m{y}^{k+1}\right\|_L^2 -
 \left\|\m{y}^{k-1}-\m{y}^k\right\|_L^2 \right),
\end{eqnarray*}
where $\omega_0, \omega_1, \omega_2 \ge 0$ are given in (\ref{def_omega}).
\end{lemma}
\begin{proof}
By the updates of $\m{h}^k$ and $\widetilde{\LAMBDA}^k$ in Algorithm~\ref{algo51},
it is easy to derive (\ref{Sec3-5}) as before.
Then, according to the first-order optimality condition of $\m{z}$-subproblem and
the assumption that $C\tr A=\m{0}$, we have
\begin{equation}\label{Sec51-bz6}
 \m{z}^{k+1}\in \C{Z},\quad l(\m{z})-l(\m{z}^{k+1})+ \left\langle\m{z}-\m{z}^{k+1}, \m{p}^k_{\m{z}}\right\rangle\geq  0,\quad \forall \m{z}\in \C{Z},
\end{equation}
where
\begin{eqnarray}
\m{p}^k_{\m{z}}&=& -C\tr  \LAMBDA^{k} +\beta C\tr\left(C\m{z}^{k+1}+A\m{x}^{k}+B\m{y}^{k}-\m{b}\right)    \nonumber\\
  &=&   -C\tr \widetilde{\LAMBDA}^{k} -\beta C\tr A(\m{x}^{k+1}-\m{x}^{k})= -C\tr \widetilde{\LAMBDA}^{k}. \nonumber
\end{eqnarray}
Similarly, we have by the $\m{y}$-update that
\begin{equation}\label{Sec51-bz7}
 \m{y}^{k+1}\in \C{Y},\quad g(\m{y})-g(\m{y}^{k+1})+ \left\langle\m{y}-\m{y}^{k+1}, \m{p}^k_{\m{y}}\right\rangle\geq  0,\quad \forall \m{y}\in \C{Y},
\end{equation}
where
\begin{eqnarray*}
\m{p}^k_y&=&
-B\tr  \LAMBDA^{k+\frac{1}{2}} +\beta B\tr\left(\C{K} \m{w}^{k+1}-\m{b}\right)  +L(\m{y}^{k+1}-\m{y}^k)
\nonumber\\
&=&  -B\tr  \LAMBDA^{k+\frac{1}{2}} +  B\tr \left( \LAMBDA^k- \tilde{\LAMBDA}^k\right)+  \left[
L
+ \beta B\tr B \right] \left(\m{y}^{k+1} - \m{y}^k  \right)\\
&=& -B\tr \tilde{\LAMBDA}^k + \tau B\tr( \LAMBDA^{k}-\tilde{\LAMBDA}^{k}) +
 \left[ L+ \beta B\tr B \right] \left(\m{y}^{k+1} - \m{y}^k  \right).
\end{eqnarray*}
Besides, it follows from the updates of $\widetilde{\LAMBDA}^k$ that
\begin{eqnarray}\label{Sec51-bz9}
\left\langle\LAMBDA- \widetilde{\LAMBDA}^k, \C{K}\widetilde{\m{w}}^k-\m{b} +\frac{1}{\beta}\left(\widetilde{\LAMBDA}^k-\LAMBDA^k\right) -B\left(\widetilde{\m{y}}^{k}-\m{y}^k\right)\right\rangle =0,\quad \forall \LAMBDA\in \mathbb{R}^n.
\end{eqnarray}
Combining the above inequalities (\ref{Sec51-bz6}), (\ref{Sec51-bz7}), (\ref{Sec51-bz9})
with (\ref{Sec3-5}), we can get
\begin{equation} \label{Sec51-bz10}
F(\m{w})-F(\widetilde{\m{w}}^k) +
\left( \m{w} - \widetilde{\m{w}}^k\right)\tr \C{J}(\widetilde{\m{w}}^k)
\geq  (\m{v}  -\widetilde{\m{v}}^k)\tr Q_k (\m{v}^k-\widetilde{\m{v}}^k) + \zeta^k,
 \end{equation}
where $\zeta^k, Q_k$ are given by (\ref{zeta_k}) and (\ref{Sec31-vwuJ}), respectively.
Then, the rest proof will be similar to that of Corollary~\ref{coll37} and {{Lemma}}~\ref{key-lemma}.
\end{proof}

Based on the above {Lemma}~\ref{Sec51-bz2}, the ergodic convergence of Algorithm~\ref{algo51}
with a sublinear convergence rate can be similarly established under the conditions
of Theorem~\ref{2-sec-bj1}.
Here, we omit the detailed proof. {Note that, if the $\m{z}$-subproblem is not easily solvable,
one could also add a positive semidefinite proximal term to linearize it. However, the requirement  $C\tr A=\m{0}$
is quite strict in applications.
In the next subsection we will propose a partially Jacobi update for the primal variables, for which $C\tr A=\m{0}$ is not required.}

\subsubsection{Extension in partially  Jacobi update}
Now, let us consider Algorithm \ref{algo3}, where
the block variables $\m{y}$ and $\m{z}$ are updated in a Jacobi fashion.

\renewcommand\figurename{Alg.}
\begin{figure}
\begin{center}
{ \tt
\begin{tabular}{l}
\hline
\\
{\bf Parameters:}
  $\beta>0,  (\tau,s)\in \Delta$,
 $L_1 $ and  $L_2$ satisfy (\ref{L1-L2}).\\
{\bf Initialization:}
$(\m{x}^0,\m{y}^0,\m{z}^0,\LAMBDA^0)$
$\in\C{X}\times\C{Y}\times\C{Z}\times \mathbb{R}^n:=\Omega,~ {\breve{\m{x}}^0=\m{x}^0}$.\\
For  $k=0,1,\ldots$ \\
\hspace*{.3in}Choose $m_k>0$, $\eta_k >0$ and
$ \C{M}_k $ such that $\C{M}_k - \beta A \tr A  \succeq \m{0}$.\\
\hspace*{.3in}$\m{h}^k :=$
$-A\tr \left[\LAMBDA^k-\beta( A\m{x}^k+B\m{y}^k+C\m{z}^k-\m{b})\right]$. \\
\hspace*{.3in}$(\m{x}^{k+1}, \breve{\m{x}}^{k+1}) =$
{\bf xsub} ($\m{x}^k, \breve{\m{x}}^k)$ with {\bf xsub} given  in ALG.\ref{algo1}. \\
\hspace*{.3in}$\LAMBDA^{k+\frac{1}{2}}=\LAMBDA^k-
\tau \beta\left(A\m{x}^{k+1}+B\m{y}^{k} +C\m{z}^k-\m{b}\right)$.\\
\hspace*{.3in}{\small$\m{y}^{k+1}\in\arg\min\limits_{\m{y} \in \C{Y}}  g(\m{y})+ \frac{\beta}{2}
\left\|   A\m{x}^{k+1}+B\m{y}+C\m{z}^k -\m{b}-\frac{\LAMBDA^{k+\frac{1}{2}}}{\beta}
\right\|^2 +\frac{1}{2}\left\|\m{y}-\m{y}^k\right\|_{L_1}^2   $}.\\
\hspace*{.3in}{\small$\m{z}^{k+1}\in\arg\min\limits_{\m{z} \in \C{Z}} l(\m{z})+ \frac{\beta}{2}
\left\| A\m{x}^{k+1}+B\m{y}^k +C\m{z}-\m{b}-\frac{\LAMBDA^{k+\frac{1}{2}}}{\beta}
\right\|^2+\frac{1}{2}\left\|\m{z}-\m{z}^k\right\|_{L_2}^2 $.}\\
\hspace*{.3in}$\LAMBDA^{k+1}=\LAMBDA^{k+\frac{1}{2}}-
s \beta\left(A\m{x}^{k+1}+B\m{y}^{k+1}+C\m{z}^{k+1}-\m{b}\right).$\\
end\\
\hline
\end{tabular}
}
\end{center}
\caption{ Extension of SAS-ADMM in partially Jacobi update }\label{algo3}
\end{figure}
\renewcommand\figurename{Fig.}

To establish the global convergence of Algorithm~\ref{algo3}, we first have the following
observations. {Denoting
 \begin{equation}\label{Sec4-Sec31-vwuJ}
 \widetilde{\LAMBDA}^k=
  \LAMBDA^{k}-\beta \left(A
\m{x}^{k+1}+B\m{y}^{k}+C\m{z}^{k}-\m{b}\right)
\end{equation}
and using} the first-order optimality condition of the $\m{y}$-subproblem, we have
\begin{equation}\label{Sec4-Chapt5-a-Sec31-xN}
 \m{y}^{k+1}\in \C{Y},\quad g(\m{y})- g(\m{y}^{k+1})+ \left\langle\m{y}-\m{y}^{k+1}, \m{p}^k_{\m{y}}\right\rangle\geq  0,\quad \forall \m{y}\in \C{Y},
\end{equation}
where
\begin{eqnarray}\label{Sec4-Chapt5-S-Sec32-RxN}
\m{p}^k_{\m{y}}&=& -B\tr  \LAMBDA^{k+\frac{1}{2}} +\beta B\tr\left(A\m{x}^{k+1}+B\m{y}^{k+1}+C\m{z}^{k}-\m{b}\right) +L_1\left(\m{y}^{k+1}-\m{y}^{k}\right)   \nonumber\\
&=& -B\tr  \LAMBDA^{k+\frac{1}{2}} +\beta B\tr\left(A\m{x}^{k+1}+B\m{y}^k+C\m{z}^{k}-\m{b}\right) +(L_1+\beta B\tr B)\left(\m{y}^{k+1}-\m{y}^{k}\right)   \nonumber\\
  &=&   -B\tr \widetilde{\LAMBDA}^{k}+\tau B\tr (\LAMBDA^k - \widetilde{\LAMBDA}^k)
  +(L_1 +\beta B\tr B)\left(\m{y}^{k+1}-\m{y}^{k}\right),\nonumber
\end{eqnarray}
and we use the relationship
\begin{equation}\label{sec5-LA12}
\LAMBDA^{k+\frac{1}{2}}= \LAMBDA^{k}-\tau( \LAMBDA^{k}-\tilde{\LAMBDA}^{k}).
\end{equation}
{combining (\ref{Sec4-Chapt5-a-Sec31-xN}) and the definition of $\m{p}^k_{\m{y}}$,} we have
\begin{eqnarray}\label{3block-bz1}
&&~~~~~g(\m{y})- g(\m{y}^{k+1})+ \left\langle \m{y}-\m{y}^{k+1},- B\tr \LAMBDA^{k+\frac{1}{2}} +\beta B\tr (\C{K}\m{w}^{k+1}-\m{b}) \right.\\
&&\qquad\qquad\qquad\qquad\qquad\left.  -\beta B\tr C( \m{z}^{k+1}-\m{z}^{k} ) +L_1(\m{y}^{k+1}-\m{y}^{k}) \right\rangle\geq  0.  \nonumber
\end{eqnarray}
Similarly, by the first-order optimality condition of the $\m{z}$-subproblem, we have
\begin{eqnarray}\label{3block-bz11}
&&~~~~~l(\m{z})- l(\m{z}^{k+1})+ \left\langle \m{z}-\m{z}^{k+1},- B\tr \LAMBDA^{k+\frac{1}{2}} +\beta C\tr (\C{K}\m{w}^{k+1}-\m{b})\right.\\
&&\qquad\qquad\qquad\qquad\qquad\left.  -\beta C\tr B( \m{y}^{k+1}-\m{y}^{k} ) +L_2(\m{z}^{k+1}-\m{z}^{k}) \right\rangle\geq  0.  \nonumber
\end{eqnarray}
Adding the above two inequalities (\ref{3block-bz1}) and (\ref{3block-bz11}), we can see
$(\m{y}^{k+1}, \m{z}^{k+1})$ satisfies the first-order {optimality} condition, hence
is a solution, of the following problem
\begin{eqnarray}\label{yz-subproblem}
(\m{y}^{k+1}, \m{z}^{k+1}) &\in\arg\min\limits_{\m{y} \in \C{Y}, \m{z} \in \C{Z}} &
g(\m{y})+ l(\m{z}) + \frac{\beta}{2} \left\| A\m{x}^{k+1}+B\m{y}+C\m{z} -\m{b}
-\frac{\LAMBDA^{k+\frac{1}{2}}}{\beta} \right\|^2  \nonumber \\
 && +\frac{1}{2}\left\|(\m{y}-\m{y}^k, \m{z}-{\m{z}^k})\right\|_{\bar{L}}^2,
\end{eqnarray}
where
\begin{equation}\label{bar-L}
\bar{L}=\left[\begin{array}{ccc}
           L_1 && - \beta B \tr C \\
           - \beta C\tr B&&  L_2
\end{array}\right].
\end{equation}
Hence, by considering $(\m{y}, \m{z})$ as one block variable,
Algorithm~\ref{algo3} is essentially a particular version of Algorithm~\ref{algo1}
for solving a 2-block problem with $L$ and $B$ being replaced by $\bar{L}$ and
$(B, C)$, respectively.

From the above observations, we can directly establish the following properties of
Algorithm~\ref{algo3}.
\begin{lemma} \label{Sec4-Sec31-bz6}
The iterates  generated by Algorithm~\ref{algo3} satisfy
\[
F(\m{w})-F(\tilde{\m{w}}^k) +
\left\langle \m{w} - \tilde{\m{w}}^k,
\C{J}(\m{w})\right\rangle
\geq  ( \m{w}-\tilde{\m{w}}^k)\tr
Q_k (\m{w}^k-\tilde{\m{w}}^k) + \zeta^k
\]
for any $\m{w} \in \Omega$, where $\zeta^k$ is given by (\ref{zeta_k}),
 \begin{equation}\label{3block-Qk}
Q_k=\left[\begin{array}{ccccccc}
           \C{D}_k && &&  &&\\
          &&L_1+\beta B\tr B&&   && -\tau B\tr\\
         && && L_2+\beta C\tr C  && -\tau C\tr\\
      && -B & & -C &&  \frac{1}{\beta} \m{I}
\end{array}\right].
\end{equation}
\end{lemma}
\begin{proof}
Notice that
\[
\bar{L} + \beta (B, C )\tr (B, C) =
\left[\begin{array}{ccc}
           L_1 + \beta B \tr B &&  \\
           &&  L_2 + \beta C \tr C
\end{array}\right].
\]
So, replacing $L$ and $B$ in {Lemma}~\ref{Sec31-bz6} by
$\bar{L}$ and $(B, C)$, respectively, this lemma
directly follows from {Lemma}~\ref{Sec31-bz6}.
\end{proof}

Similarly, identifying $L$ and $B$ in (\ref{tilde-Q})
with $\bar{L}$ and $(B, C)$, respectively, it follows from  Corollary \ref{coll37}
that
\begin{eqnarray} \label{bjc-39-141}
&&F(\m{w})-F(\widetilde{\m{w}}^{k}) + {\left\langle \m{w} - \widetilde{\m{w}}^k,  \C{J}(\m{w})\right\rangle} \\
&& \ge \frac{1}{2}\left\{\left\|\m{w}-\m{w}^{k+1}\right\|^2_{\widetilde{Q}_k}-\left\|\m{w}-\m{w}^{k}\right\|^2_{\widetilde{Q}_k}+  \left\|\m{w}^k-\widetilde{\m{w}}^k\right\|_{\widetilde{G}_k}^2 \right\} + \zeta^k,\nonumber
\end{eqnarray}
 where $\zeta^k$ is given by (\ref{zeta_k}) and
\begin{eqnarray}\label{3block-tidleQk1}
\qquad~ \widetilde{Q}_k=\left[\begin{array}{cccc}
           \C{D}_k & &  &\\
          &L_1+(1-\frac{\tau s}{\tau +s})\beta B\tr B& -\frac{\tau s}{\tau +s} \beta B\tr C  & -\frac{\tau  }{\tau +s} B\tr\\
         & -\frac{\tau s}{\tau +s}\beta C\tr B& L_2+(1-\frac{\tau s}{\tau +s})\beta C\tr C  & -\frac{\tau  }{\tau +s}C\tr\\
      &  -\frac{\tau  }{\tau +s} B   & -\frac{\tau  }{\tau +s} C  &  \frac{1}{(\tau +s)\beta} \m{I}
\end{array}\right],
\end{eqnarray}
\begin{eqnarray}\label{3block-tidleGk11}
~~\widetilde{G}_k=\left[\begin{array}{cccc}
           \C{D}_k & &  &\\
          &L_1+(1-s)\beta B\tr B& -s\beta B\tr C  & (s-1) B\tr\\
         & -s\beta C\tr B& L_2+(1-s )\beta C\tr C  &(s-1) C\tr \\
      &  (s-1) B  &  (s-1) C &  \frac{2-\tau-s}{\beta} \m{I}
\end{array}\right].
\end{eqnarray}
Then, we have the following {estimate} on a lower bound of $\left\|\m{w}^k-\widetilde{\m{w}}^k\right\|_{\widetilde{G}_k}$.

\begin{lemma}\label{exten-51}
Suppose there exist $ \gamma_1>0$ and $\gamma_2>0$ with $\gamma_1 \gamma_2\geq 1$ such that
\begin{equation}\label{L1-L2}
 L_1\succeq  \gamma_1\beta B\tr B \quad \mbox{and} \quad L_2\succeq  \gamma_2\beta C\tr C.
\end{equation}
Then, for any $(\tau,s)\in\Delta$ defined in (\ref{domainK}), we have
 $\widetilde{Q}_k$ defined in (\ref{3block-tidleQk1})
is positive semidefinite and
\begin{eqnarray}
&&\left\|\m{w}^k-\widetilde{\m{w}}^k\right\|_{\widetilde{G}_k}^2\geq \left\|\m{x}^k - \m{x}^{k+1}\right\|_{\C{D}_k}^2+
\omega_0\left\|\C{K}\m{w}^{k+1}-\m{b} \right\|^2
\label{3block-impor1121}\\
 & &    +
\omega_1 \left(\left\|\C{K}\m{w}^{k+1}-\m{b} \right\|^2-\left\|\C{K}\m{w}^k-\m{b} \right\|^2 \right)\nonumber \\
&& +\omega_2\left(\left\| \left(\begin{array}{c}  \m{y}^k-\m{y}^{k+1}  \nonumber\\ \m{z}^k-\m{z}^{k+1}  \end{array} \right) \right\|_{\bar{L}}^2-\left\| \left(\begin{array}{c}  \m{y}^k-\m{y}^{k-1} \nonumber\\ \m{z}^k-\m{z}^{k-1}  \end{array} \right)\right\|_{\bar{L}}^2 \right), \nonumber
\end{eqnarray}
where $\omega_0,\omega_1, \omega_2\geq 0$ is defined in (\ref{def_omega}) and
$\bar{L}$ is defined in (\ref{bar-L}).
\end{lemma}
\begin{proof}
First, since  $L_1\succeq  \gamma_1\beta B\tr B$ and $ L_2\succeq  \gamma_2\beta C\tr C$,
it follows from $ \gamma_1>0,  \gamma_2>0$ and $\gamma_1 \gamma_2\geq 1$ that
\begin{equation}\label{Posity-BarL}
\bar{L}=\left[\begin{array}{ccc}
             L_1&& - \beta B\tr C\\
          - \beta C\tr B&&   L_2
\end{array}\right]\succeq \beta \left[\begin{array}{ccc}
             {\gamma_1} B\tr B && -B\tr C\\
          -C\tr B&&   {\gamma_2} C\tr C
\end{array}\right]\succeq \m{0}.
\end{equation}
By Lemma~\ref{Posity-1}, we have $\widetilde{Q}_k$ defined in (\ref{3block-tidleQk1})
is positive semidefinite if
\begin{equation}\label{barL-0}
\bar{L} \succeq  (\tau-1) \beta (B, C) \tr (B, C).
\end{equation}
Since $\tau \le 1$ for any $(\tau,s)\in\Delta$, we have
$ \m{0} \succeq (\tau-1) \beta (B, C) \tr (B, C)$.
Therefore, we have from (\ref{Posity-BarL}) that (\ref{barL-0}) holds
and therefore, $\widetilde{Q}_k$
defined in (\ref{3block-tidleQk1}) is positive semidefinite.
Furthermore, it follows from {Lemma}~\ref{key-lemma} that (\ref{3block-impor1121}) holds
as long as $\bar{L} \succeq \m{0}$ which is verified by (\ref{Posity-BarL}).
\end{proof}

Now, defining $\m{w}_T :=\frac{1}{T}\sum_{k=\kappa}^{\kappa+T}\tilde{\m{w}}^{k}$
for some integers $T>0$ and ${\kappa > 0}$, under the same conditions in
 {Theorem}~\ref{Sec3-theore1}, by {Lemma}~\ref{exten-51}
and similar to the proof of Theorem~\ref{Sec3-theore1},  we can obtain
  \begin{eqnarray*}\label{3block-bz6}
    &&\mathbb{E}\left[ F(\m{w}_T)-F(\m{w})+(\m{w}_T-\m{w})\tr \C{J}(\m{w})\right]\\
    &&\leq
\frac{1}{2T}\bigg\{ \sigma^2\sum\limits_{k=\kappa}^{\kappa+T} \eta_k m_k
+ \frac{4}{m_\kappa(m_\kappa+1)\eta_\kappa} \left\|\m{x}- \breve{\m{x}}^\kappa\right\|_{\C{H}}^2+\left\|\m{w}-\m{w}^\kappa\right\|^2_{\widetilde{Q}_\kappa} \nonumber\\
&&  + \omega_1 \left\|\C{K}\m{w}^\kappa-\m{b}\right\|^2+
\omega_2\left\| \left(\begin{array}{c}  \m{y}^\kappa-\m{y}^{\kappa-1} \\
\m{z}^\kappa-\m{z}^{\kappa-1}  \end{array} \right)\right\|_{\bar{L}}^2  \bigg\},
  \end{eqnarray*}
where $\omega_1 \ge 0$ and $\omega_2 \ge 0$ given in (\ref{def_omega}).
So, by the choice of the parameters $(\eta_k, m_k)$ chosen in  Theorem~\ref{2-sec-bj1},
we  can obtain
\[
\left| \mathbb{E} \big[ F(\m{w}_T)-F(\m{w}^*)\big]\right| = E_\varrho(T) =
\mathbb{E} \big[\left\|A\m{x}_{T}+B\m{y}_{T}+C\m{z}_T-\m{b} \right\|\big],
\]
where $E_\varrho(T) = \C{O} (1/T)$ for the parameter $\varrho > 1$ and
$E_\varrho(T) = \C{O} (T^{-1}\log T)$ for $\varrho = 1$.

\renewcommand\figurename{Alg.}
\begin{figure}
\begin{center}
{ \tt
\begin{tabular}{l}
\hline
\\
{\bf Parameters:}
 $\beta>0,  (\tau,s)\in \Delta$ and
$L_{i}\succeq(q-1)\beta B_i\tr B_i $ for all ${i}=1,\ldots, q$.\\
{\bf Initialization:}
$(\m{x}^0,\m{y}^0,\LAMBDA^0)$
$\in\C{X}\times\C{Y} \times \mathbb{R}^n,~ {\breve{\m{x}}^0=\m{x}^0}$.\\
For  $k=0,1,\ldots$ \\
\hspace*{.3in}Choose $m_k>0$, $\eta_k >0$ and
$ \C{M}_k $ such that $\C{M}_k - \beta A \tr A  \succeq \m{0}$.\\
\hspace*{.3in}$\m{h}^k :=$
$-A\tr \left[\LAMBDA^k-\beta( A\m{x}^k+B\m{y}^k-\m{b})\right]$. \\
\hspace*{.3in}$(\m{x}^{k+1}, \breve{\m{x}}^{k+1}) =$
{\bf xsub} ($\m{x}^k, \breve{\m{x}}^k)$ with {\bf xsub} given in ALG.\ref{algo1}. \\
\hspace*{.3in}$\LAMBDA^{k+\frac{1}{2}}=\LAMBDA^k-
\tau \beta\left(A\m{x}^{k+1}+B\m{y}^{k} -\m{b}\right)$.\\
\hspace*{.3in}For $i=1,2,\cdots,q$, \\
\hspace*{.4in}{\scriptsize$\m{y}_i^{k+1}\in\arg\min\limits_{\m{y}_i \in \C{Y}_i}  g_i(\m{y}_i)+ \frac{\beta}{2}
\left\|   A\m{x}^{k+1}+{B_i\m{y}_i+\sum\limits_{l\neq i, l=1}^{q}B_l\m{y}_l^k} -\m{b}-\frac{\LAMBDA^{k+\frac{1}{2}}}{\beta}
\right\|^2 +\frac{1}{2}\left\|\m{y}_i-\m{y}_i^k\right\|_{L_i}^2   $}.\\
\hspace*{.3in}end  \\
\hspace*{.3in}$\LAMBDA^{k+1}=\LAMBDA^{k+\frac{1}{2}}-
s \beta\left(A\m{x}^{k+1}+B\m{y}^{k+1}-\m{b}\right).$\\
end\\
\hline
\end{tabular}
}
\end{center}
\caption{ {Multi}-block extension of SAS-ADMM in partially  Jacobi update }\label{algo5}
\end{figure}
\renewcommand\figurename{Fig.}

\begin{remark}
Observing from the above analysis, Algorithm~\ref{algo3} could be in fact generalized
to Algorithm~\ref{algo5} for solving the {multi}-block separable convex optimization:
 \begin{equation} \label{Sec44-jProb}
\begin{array}{lll}
\min   & F(\m{w}):=f(\m{x})+ \sum\limits_{i=1}^{q} g_i(\m{y}_i)\\
\textrm{s.t. } &\C{K}\m{w}:=A\m{x}+\sum\limits_{i=1}^{q}B_i\m{y}_i=\m{b},\\
& \m{x}\in \C{X},\ \m{y}_i\in \C{Y}_i,\ i=1,2,\cdots,q,
\end{array}
\end{equation}
where $f$ has the same definition as in (\ref{P}),
$g_i: \C{Y}_i \to \mathbb{R} \cup \{+\infty\} $ is a convex
but possibly  nonsmooth function,
$B_i\in \mathbb{R}^{n \times n_i}$ and $\C{Y}_i \subset \mathbb{R}^{n_i} $ is a
 closed  convex  subset.
The  convergence of Algorithm~\ref{algo5} can be analogously established with proper
modifications on the convergence proof of Algorithm~\ref{algo3}.
Here, we only give a very brief explanation.
Denote $g(\m{y})=\sum\limits_{i=1}^{q} g_i(\m{y}_i), B=(B_1,\cdots,B_q),$   $\m{y}=(\m{y}_1;\cdots;\m{y}_q),$ $\m{y}^k=(\m{y}_1^k;\cdots;\m{y}_q^k)$ and $\C{Y}=\C{Y}_1\times\cdots\times\C{Y}_q$.
Then, by the first-order optimality condition  of $\m{y}_i$-subproblem, we have $\m{y}^{k+1}_i\in\C{Y}_i$ and
\begin{eqnarray*}
&&~~~~~g_i(\m{y}_i)- g_i(\m{y}^{k+1}_i)+ \left\langle \m{y}_i-\m{y}_i^{k+1},- B_i\tr \LAMBDA^{k+\frac{1}{2}} +\beta B_i\tr (\C{K}\m{w}^{k+1}-\m{b})-\right.\\
&&\qquad \quad\qquad\left.  \beta \sum\limits_{l\neq i, l=1}^{q}B_i\tr B_l( \m{y}_l^{k+1}-\m{y}_l^{k} ) +L_i(\m{y}_i^{k+1}-\m{y}_i^{k}) \right\rangle\geq  0,~ \forall\m{y}_i\in \C{Y}_i. \nonumber
\end{eqnarray*}
After adding the above inequality from $i=1$ to $q$, we can see $\m{y}^{k+1}$ satisfies
the first-order optimality condition, hence is a solution, of the following problem:
\[
\m{y}^{k+1}\in\arg\min\limits_{\m{y}\in\C{Y}}
  g(\m{y}) + \frac{1}{2}\left\|\m{y}-\m{y}^k\right\|_{\tilde{L}}^2
+ \frac{\beta}{2}
\left\| A\m{x}^{k+1}+B\m{y} -\m{b}-\frac{\LAMBDA^{k+\frac{1}{2}}}{\beta}
\right\|^2,
\]
where
 \begin{equation}\label{tilde-L}
\tilde{L}=\left[\begin{array}{cccc}
             L_1&  -\beta B_1\tr B_2&\cdots &-\beta B_1\tr B_q\\
          -\beta B_2\tr B_1&  L_2&\cdots &-\beta B_2\tr B_q\\
          \vdots&  \vdots&\ddots&\vdots\\
          -\beta B_q\tr B_1& -\beta B_q\tr B_2&\cdots &L_q\\
\end{array}\right].
\end{equation}
So,  by a similar analysis to Algorithm~\ref{algo3},   the inequality (\ref{bjc-39-141})
 holds with
{\tiny\begin{eqnarray*}
  \widetilde{Q}_k
&= &
  \left[\begin{array}{c|cccc|c}
           \C{D}_k & &  &&&\\ \hline
         &L_1+(1-\frac{\tau s}{\tau +s})\beta B_1\tr B_1& -\frac{\tau s}{\tau +s} \beta B_1\tr B_2  & \cdots&-\frac{\tau s}{\tau +s} \beta B_1\tr B_q &-\frac{\tau  }{\tau +s} B_1\tr \\
        & -\frac{\tau s}{\tau +s}\beta B_2\tr B_1& L_2+(1-\frac{\tau s}{\tau +s})\beta B_2\tr B_2  & \cdots&-\frac{\tau s}{\tau +s}\beta B_2\tr B_q &-\frac{\tau  }{\tau +s}B_2\tr\\
    & \vdots& \vdots  & \ddots&\vdots &\vdots\\
  & -\frac{\tau s}{\tau +s}\beta B_q\tr B_1&  -\frac{\tau s}{\tau +s}\beta B_q\tr B_2& \cdots& L_q+(1-\frac{\tau s}{\tau +s})\beta B_q\tr B_q &-\frac{\tau  }{\tau +s}B_q\tr\\  \hline
      &  -\frac{\tau  }{\tau +s} B_1   & -\frac{\tau  }{\tau +s} B_2  & \cdots& -\frac{\tau  }{\tau +s} B_q&  \frac{1}{(\tau +s)\beta} \m{I}
\end{array}\right]
\end{eqnarray*}}
and
{\scriptsize\begin{eqnarray*}
 \widetilde{G}_k
 &= &
\left[\begin{array}{c|cccc|c}
           \C{D}_k & &  &&&\\ \hline
          &L_1+(1-s)\beta B_1\tr B_1& -s \beta B_1\tr B_2  & \cdots&-s \beta B_1\tr B_q &(s-1)B_1\tr \\
         & -s\beta B_2\tr B_1& L_2+(1-s)\beta B_2\tr B_2  & \cdots&-s\beta B_2\tr B_q &(s-1)B_2\tr\\
    & \vdots& \vdots  & \ddots&\vdots &\vdots\\
  & -s\beta B_q\tr B_1&  -s\beta B_q\tr B_2& \cdots& L_q+(1-s)\beta B_q\tr B_q &(s-1)B_q\tr\\  \hline
      &  (s-1) B_1   & (s-1) B_2  & \cdots& (s-1) B_q&  \frac{2-\tau-s}{\beta} \m{I}
\end{array}\right].
\end{eqnarray*}}
If $ L_i\succeq (q-1)\beta   B_i\tr B_i $ for $i = 1, \ldots, q$,
then for any $(\tau,s)\in \Delta$ defined by (\ref{domainK}),  the above  matrix
$\widetilde{Q}_k$ is positive semidefinite and
\begin{eqnarray*}
&&\left\|\m{w}^k-\widetilde{\m{w}}^k\right\|_{\widetilde{G}_k}^2\geq \left\|\m{x}^k - \m{x}^{k+1}\right\|_{\C{D}_k}^2+
\omega_0\left\|\C{K}\m{w}^{k+1}-\m{b} \right\|^2\\
 & &    +
\omega_1 \left(\left\|\C{K}\m{w}^{k+1}-\m{b} \right\|^2-\left\|\C{K}\m{w}^k-\m{b} \right\|^2 \right)\nonumber \\
&& +\omega_2\left(\left\|\m{y}^{k+1}-\m{y}^k\right\|_{\tilde{L}}^2
-\left\|\m{y}^k-\m{y}^{k-1}\right\|_{\tilde{L}}^2\right),
\end{eqnarray*}
where $\omega_0,\omega_1, \omega_2\geq 0$ is defined in (\ref{def_omega}) and
$\tilde{L}$ is defined in (\ref{tilde-L}).
The above discussions imply that Algorithm \ref{algo5}  has the same convergence properties as  Algorithm  \ref{algo3}
and can be also considered as
 a stochastic extension of the deterministic GS-ADMM \cite{BLXZ2018} for solving the grouped multi-block separable convex optimization problem.
\end{remark}
\bibliographystyle{siam}

\end{document}